\tikzstyle{green}=[text={black!30!green}]
\tikzstyle{blue}=[text=blue]
\tikzstyle{red}=[text=red]
\tikzstyle{puncture}=[fill=white, draw=red, shape=circle, minimum size=1pt]
\tikzstyle{blackpuncture}=[fill=white, draw=black, shape=circle, minimum size=1pt]
\tikzstyle{cyan}=[text=cyan]
\tikzstyle{markings}=[-, draw=red, line width=1pt, line cap=round]
\tikzstyle{overbraid}=[-, draw=white, fill=none, line width=6pt]
\tikzstyle{thick}=[-, line width=2pt, draw=blue]
\tikzstyle{dashedline}=[-, dashed]
\tikzstyle{dottedline}=[-, dash pattern=on 0.75pt off 0.75pt, line width=0.75pt]
\tikzstyle{thin red}=[-, line width=0.25pt, draw=black]
\tikzstyle{tangle}=[-, draw=blue, line width=1pt, fill={blue!20}]
\tikzstyle{scc}=[-, draw={black!30!green}, fill={blue!20}, line width=1pt]
\tikzstyle{inner boundary}=[-, fill=white]
\tikzstyle{outer boundary}=[-, fill={red!20}]
\tikzstyle{lowerboundery}=[-, line width=1.5pt, line cap=round, draw=red]
\tikzstyle{upperboundery}=[-, line width=1.5pt, line cap=round, draw=blue]
\tikzstyle{dottedcycle}=[-, draw=blue, dash pattern=on 0.5pt off 1pt on 4pt off 1pt, decoration={markings, mark=at position 0.5 with {\arrow{>}}}, postaction=decorate]
\tikzstyle{cycle}=[-, draw=blue, decoration={markings, mark=at position 0.5 with {\arrow{>}}}, postaction=decorate]
\tikzstyle{path}=[-, draw=cyan, line width=0.25pt]
\tikzstyle{arrowpath}=[-, draw=cyan, line width=0.25pt, decoration={markings, mark=at position 0.5 with {\arrow{>}}}, postaction=decorate]
\tikzstyle{orientedpath}=[-, line width=0.25pt, decoration={markings, mark=at position 0.5 with {\arrow{<}}}, postaction=decorate]
\tikzstyle{inner square}=[-, fill={blue!20}]
\tikzstyle{outer square}=[-, fill={red!20}]
\tikzstyle{blueline}=[-, draw=blue]
\tikzstyle{greenline}=[-, draw=green]
\tikzstyle{bluesquare}=[-, draw=blue, fill={blue!20}]
\numberwithin{figure}{section}
\newtheorem{theorem}{Theorem}[section]
\newtheorem*{theorem*}{Theorem}
\newtheorem{lemma}[theorem]{Lemma}
\newtheorem*{lemma*}{Lemma}
\newtheorem{proposition}[theorem]{Proposition}
\newtheorem*{proposition*}{Proposition}
\newtheorem{conjecture}[theorem]{Conjecture}
\newtheorem{corollary}[theorem]{Corollary}
\newtheorem*{corollary*}{Corollary}
\theoremstyle{remark}
\newtheorem{remark}[theorem]{Remark}
\newtheorem{notation}[theorem]{Notation}
\newtheorem{example}[theorem]{Example}
\newtheorem*{question*}{Question}
\theoremstyle{definition}
\newtheorem{definition}[theorem]{Definition}
\newtheorem*{definition*}{Definition}
\newcommand{\Cat}{\mathcal{C}}
\newcommand{\dol}{\overline{\partial}}
\newcommand{\cA}{\mathcal{A}}
\newcommand{\Cinf}{C^\infty}
\newcommand{\cO}{\mathcal{O}}
\newcommand{\VL}{\mathrm{Vect}_\Lambda}
\newcommand{\VLone}{\mathrm{Vect}_{\Lambda_1}}
\newcommand{\VLtwo}{\mathrm{Vect}_{\Lambda_2}}
\newcommand{\CN}{\mathrm{C}_\Nu}
\newcommand{\GMod}{\mathfrak{gmod}}
\newcommand{\BraidFun}{\mathfrak{braidfun}}
\newcommand{\Braid}{\mathfrak{Braid}}
\newcommand{\Rib}{\mathfrak{Rib}}
\newcommand{\Modcat}{\mathfrak{Mod}}
\newcommand{\lra}{\longrightarrow}
\newcommand{\ra}{\rightarrow}
\newcommand{\Nu}{\mathcal{V}}
\newcommand{\PMod}[1]{\mathrm{PMod}(#1)}
\newcommand{\tPMod}[1]{\mathrm{PM\tilde{o}d}(#1)}
\newcommand{\PB}{\mathrm{PB}}
\protected\def\myphantom#1{\vphantom{#1}}
\newcommand{\mysecondleftidx}[3]{{\myphantom{#2}}#1#2#3}
\newcommand{\Mrpo}[2]{\mysecondleftidx{^1}{\overline{\mathcal{M}}}{_{0,{#1}}}(#2)}
\newcommand{\Mg}[2]{\mathcal{M}_{{#1},{#2}}}
\newcommand{\tMgb}[2]{\widetilde{\mathcal{M}}_{#1}^{#2}}
\newcommand{\oMg}[2]{\overline{\mathcal{M}}_{{#1},{#2}}}
\newcommand{\Mgrp}[3]{\overline{\mathcal{M}}_{{#1},{#2}}(#3)}
\newcommand{\Mgrpo}[3]{\mysecondleftidx{^1}{\overline{\mathcal{M}}}{_{{#1},{#2}}}(#3)}
\newcommand{\Cgrp}[3]{\overline{\mathcal{C}}_{{#1},{#2}}(#3)}
\newcommand{\Mgrb}[3]{\overline{\mathcal{M}}_{#1}^{#2}(#3)}
\newcommand{\Mgrbt}[4]{\overline{\mathcal{M}}_{#1}^{#2}(#3,#4)}
\newcommand{\Br}[1]{\mathrm{B}\mu_{#1}}
\newcommand{\End}[1]{\mathrm{End}(#1)}
\newcommand{\GLn}[2]{\mathrm{GL}_{#1}(#2)}
\newcommand{\Hom}[2]{\mathrm{Hom}(#1,#2)}
\newcommand{\Z}{\mathbb{Z}}
\newcommand{\C}{\mathbb{C}}
\newcommand{\id}{\mathrm{id}}
\newcommand{\ul}{\underline{\lambda}}
\newcommand{\ZD}{\mathrm{Z}_{D'}}
\newcommand{\HD}{\mathrm{H}_{D'}}
\newcommand{\CdR}{\Cat_{dR}}
\newcommand{\CdRs}{\Cat_{dR}^{ss}}
\newcommand{\CD}{\Cat_{D'}}
\newcommand{\CH}{\Cat_{\mathrm{H}}}
\newcommand{\hCdRs}{\hat{\Cat}_{dR}^{ss}}
\newcommand{\hCD}{\hat{\Cat}_{D'}}
\newcommand{\hCH}{\hat{\Cat}_{\mathrm{H}}}
\newcommand{\CdRz}{\Cat_{dR}^0}
\newcommand{\hCDz}{\hat{\Cat}_{D'}^0}
\newcommand{\hCHz}{\hat{\Cat}_{\mathrm{H}}^0}
\newcommand{\Lg}{\mathfrak{g}}
\newcommand{\sltwo}{\mathfrak{sl}_2}
\title{Semisimplicity of conformal blocks}
\author{Pierre Godfard}
\begin{document}

\begin{abstract}
  We prove that braid group representations associated to braided fusion categories
  and mapping class group representations associated to modular fusion categories are always semisimple.
  The proof relies on the theory of extensions in non-Abelian Hodge theory and on Ocneanu rigidity.
  By combining this with previous results on the existence of variations in Hodge structures,
  we further show that such a braid group or mapping class group representation
  preserves a non-degenerate Hermitian form and can be defined over some CM number field.
\end{abstract}

\maketitle

%-------------------------------------------------------------------------------------------------------------------------

\section{Introduction}

%-------------------------------------------------------------------------------------------------------------------------

For modular fusion categories constructed from Lie algebras, it is known that their associated mapping class group representations are unitary and,
hence, semisimple. While unitarity does not hold for general modular fusion categories, in this paper,
we prove that semisimplicity holds for all mapping class group representations associated to modular fusion categories.
The approach is axiomatic and based on non-Abelian Hodge theory and Ocneanu rigidity. We also address the cases
of ribbon fusion categories and braided fusion categories.

%-------------------------------------------------------------------------------------------------------------------------

\subsection{Quantum representations and conformal blocks}\label{introquantumrep}

%-------------------------------------------------------------------------------------------------------------------------

For each choice of a simple Lie algebra $\mathfrak{g}$ over $\C$ and of integer $\ell\geq 1$ called level,
the Reshetikhin-Turaev construction produces a collection of representations of mapping class groups.
More precisely, there is a finite set $\Lambda_\ell$ of integral dominant weights for $\mathfrak{g}$,
such that for each $g,n\geq 0$ and $\lambda_1,\dotsc,\lambda_n\in\Lambda_\ell$, a representation
\begin{equation*}
  \rho_g(\ul):\tPMod{S_g^n}\lra \GLn{d}{\C}
\end{equation*}
is given, where $\tPMod{S_g^n}$ is a central extension by $\Z$ of the mapping class group $\PMod{S_g^n}$ of the compact surface of genus $g$ with $n$ boundary components.
The dimension $d$ depends on $g$, $n$, and $\ul$.

These representations are referred to as \emph{quantum representations of mapping class groups}, as their construction goes through
quantum group representation theory. For a fixed pair $(\mathfrak{g},\ell)$, the representations $\rho_g(\ul)$ satisfy strong compatibilities,
and the data of these representations and their compatibilities has been axiomatized into the notion of \emph{modular functor}
(see \cite{bakalovLecturesTensorCategories2000} or \Cref{definitiongeometricmodularfunctor}). Note that there are some modular functors which are not known to arise from a pair $(\Lg,\ell)$,
for example those constructed from the Drinfeld centers of (generalized) Haagerup fusion categories \cite{grossmanDrinfeldCentersFusion2023}.

In this article, we will take the axiomatic approach to modular functors and be blind to how they are constructed.
%If one restricts the notion of modular functor
%to genus $0$ mapping class groups, one gets the notion of genus $0$ modular functor. Further restriction to mapping class groups of punctured disks,
%i.e. pure braid groups, yields the notion of braided functor.
To a modular functor, one can associate a \emph{modular fusion category}: a fusion tensor category with a braiding and ribbon structure
satisfying a condition called modularity. In fact, the datum of a modular category is equivalent to that of a modular functor
(see \Cref{fullfaithfulnesses} below for a statement and references).
%Similarly, the notions of genus $0$ modular functor and ribbon fusion category are equivalent, and so are those of braided functor
%and braided fusion category.
%To give an idea of how the category is related to mapping class group representations, let us comment on the latter case.
%Given a braided fusion category $\Cat$, one gets, as part of the corresponding braided functor,
%representations of pure braid groups: for each $n\geq 0$ and $y,x_1,\dotsc,x_n$ objects of $\Cat$, a representation
%\begin{equation*}
%  \PB_n\lra \mathrm{GL}\left(\hom_\Cat(y,x_1\otimes\dotsb\otimes x_n)\right)
%\end{equation*}
%is given by the associator and braiding in $\Cat$. The compatibilities between these representations
%then include the following so called gluing maps, given, for each choice of $n,m\geq 0$ and $y,z,x_1,\dotsc,x_n,w_1,\dotsc,w_m$ in $\Cat$,
%by the partial compositions 
%\begin{multline*}
%  \hom_\Cat(y,z\otimes x_1\otimes\dotsb\otimes x_n)\otimes\hom_\Cat(z,w_1\otimes\dotsb\otimes w_m)\\
%  \lra \hom_\Cat(y,w_1\otimes\dotsb\otimes w_m\otimes x_1\otimes\dotsb\otimes x_n).
%\end{multline*}
%The composition above is equivariant for the action of $\PB_{1+n}\times \PB_m\subset \PB_{m+n}$,
%where the inclusion of groups is obtained by splitting the $1^\mathrm{st}$
%strand of $\PB_{1+n}$ into $m$ strands. In the case of modular categories, this picture is extended by the Reshetikhin-Turaev construction
%to include representations of higher genus mapping class groups.
For the modular functor coming from a pair $(\mathfrak{g},\ell)$, the associated modular category is a semisimplification
of the representation category of the quantum group $u_q(\Lg)$ at $q$ a $2m_\Lg\ell$-th root of unity
(see \cite[3.3]{bakalovLecturesTensorCategories2000} for details).

As mapping class groups are fundamental groups of moduli spaces of curves, by the Riemann-Hilbert correspondence,
a quantum representation $\rho_g(\ul)$ can alternatively be seen as a bundle with flat connection over some moduli space of curves.
More precisely, for each modular category $\Cat$, and each choice of $g,n\geq 0$
and objects $x_1,\dotsc,x_n$ of $\Cat$, on gets a bundle with flat connection $(\Nu_g(x_1,\dotsc,x_n),\nabla)$
on some $(\C^*)^{n+1}$-bundle $\tMgb{g}{n}$ over the moduli stack $\Mg{g}{n}$ of genus $g$ curves with $n$ marked points
\cite[6.4]{bakalovLecturesTensorCategories2000}. These bundles with flat connection are usually referred
to as \emph{conformal blocks}.

In the case where the modular functor comes from a pair $(\mathfrak{g},\ell)$,
the associated bundles with flat connection $(\Nu_g(x_1,\dotsc,x_n),\nabla)$ can be constructed directly using the representation
theory of the affine Lie algebra $\widehat{\Lg}$ associated to $\Lg$. The equivalence of this construction
with the quantum group construction mentioned above was proved by Finkelberg \cite{finkelbergEquivalenceFusionCategories1996} and
is based on works of Kazhdan and Lusztig \cite{kazhdanTensorStructuresArising1994}.

In this paper, we will use an alternative and more convenient description of the $(\Nu_g(x_1,\dotsc,x_n),\nabla)$
as bundles with flat connection on some twisted compactifications of moduli spaces of curves. These are proper Deligne-Mumford
stacks. See \Cref{twistedmodulispaces} below for details.

The following table recaps the situation, and also includes the cases of braided fusion categories
and ribbon fusion categories that we did not mention above.

\begin{center}
  \begin{NiceTabular}{ |m{0.12\textwidth}|m{0.2\textwidth}|m{0.3\textwidth}|m{0.25\textwidth}| }
    \toprule
   Category:      & Functor:                  & Representations of:                                             & flat bundles on the compactification: \\[0.25cm] 
   \midrule
   Braided fusion & Braided functor           & Pure braid groups $\PB_n$                                       & $\Mrpo{n}{r}$ (rk. \ref{boundary})\\[0.25cm] 
   \midrule
   Ribbon fusion  & Genus $0$ modular functor & Genus $0$ mapping class groups $\PMod{S_0^n}$                   & $\Mgrb{0}{n}{r}$ (not. \ref{notationonemgr})\\[0.25cm] 
   \midrule
   Modular fusion & Modular functor           & Central $\Z$-extensions $\tPMod{S_g^n}$ of mapping class groups & $\Mgrbt{g}{n}{r}{s}$ (sec. \ref{twistedmodulispaces})\\[0.25cm]  
   \bottomrule
  \end{NiceTabular}
\end{center}

%-------------------------------------------------------------------------------------------------------------------------

\subsection{Semisimplicity}

%-------------------------------------------------------------------------------------------------------------------------

Quantum representations constructed from a Lie algebra $\Lg$ and a level $\ell\geq 1$ are known to be semisimple.
Indeed, for each $(\Lg,\ell)$, the monodromies of the associated quantum representations are defined
over a cyclotomic field, and for an appropriate embedding of this field into $\C$,
the representations are unitary and thus semisimple.

Proofs of unitarity either go through the quantum group construction, see
\cite{kirillovInnerProductModular1996,kirillovInnerProductModular1998,wenzlTensorCategoriesQuantum1998}, or through the genus $0$ geometric
construction, see \cite{ramadasHarderNarasimhanTraceUnitarity2009,belkaleUnitarityKZHitchin2012}.
Both of these proofs of unitarity are intricate and involve specific constructions of the modular functor.
This is not surprising, as unitarity of quantum representations is not true for general modular categories, even up to Galois conjugation.
Consider, for example, the tensor product of the unitary modular category associated to $(\sltwo,3)$ with one of its $2$ non-unitary
Galois conjugates, and the quantum representation $\rho_0((2,2),\dotsc,(2,2))$, $n=4$ (in this case $\Lambda=\{0,1,2,3\}^2$).
If it were unitary, its Toledo class, as defined in \cite[4.3]{deroinToledoInvariantsTopological2022}, would vanish.
However, it does not, see \cite[5.3]{deroinToledoInvariantsTopological2022}.

There is also a notion of unitarity for modular categories, which implies unitarity of its quantum representations,
see \cite[II.5]{turaevQuantumInvariantsKnots2016}.
Then \cite[Rmk. 8.26]{etingofFusionCategories2005} provides another example of a modular category with no unitary Galois conjugate.

In this article, we give an axiomatic proof of semisimplicity of quantum representations associated to braided, ribbon and modular categories. 

\begin{theorem}\label{mainresult}
  For any modular fusion category, the associated
  representations of central extensions $\tPMod{S_g^n}$ of mapping class groups are semisimple.
  For any ribbon fusion category, the associated representations of genus $0$ mapping class groups $\PMod{S_0^n}$ are semisimple.
  For any braided fusion category, the associated pure braid group representations are semisimple.
\end{theorem}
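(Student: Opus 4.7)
The plan is to establish semisimplicity by proving the vanishing of $\mathrm{Ext}^1$ groups between irreducible subquotients of quantum representations, combining non-Abelian Hodge theory on the compactified moduli spaces with Ocneanu rigidity of fusion categories.

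First I would reduce the statement to showing that $\mathrm{Ext}^1(V_2, V_1) = 0$ for any two irreducible subquotients $V_1, V_2$ of quantum representations, which is equivalent to semisimplicity. In each of the three settings of the theorem, such a representation is realized geometrically as a local system, or equivalently a flat vector bundle, on a smooth proper Deligne--Mumford stack given by a twisted compactification of a moduli of curves, as recorded in the table of Section~\ref{introquantumrep}. Working on the compactification rather than on the open moduli space is essential: it makes the base proper, which is what allows non-Abelian Hodge theory to be applied in its strongest form.

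The central step is to interpret a non-trivial extension of flat bundles as a first-order deformation of the underlying categorical structure. An extension class in $\mathrm{Ext}^1(V_2, V_1)$ gives a flat bundle over the base tensored with $\C[\epsilon]/(\epsilon^2)$, i.e.\ a first-order deformation of $V_1 \oplus V_2$. Using the full system of conformal blocks across all surfaces and object choices, together with their factorization / sewing compatibilities and the equivalence between modular functors and (braided / ribbon / modular) fusion categories recalled in the introduction, I would promote this single deformation of a local system into a first-order deformation of the fusion category itself. Non-Abelian Hodge theory on the proper DM stack is what furnishes the necessary control of extension classes and lets one transport the extension-theoretic data to the categorical side.

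Finally, I would apply Ocneanu rigidity, which asserts that fusion categories, together with their braided, ribbon and modular enhancements, admit no non-trivial infinitesimal deformations. This forces the categorical deformation produced above to be trivial, and a descent step then translates triviality at the categorical level back into the splitting of the original extension of flat bundles, yielding the desired vanishing of $\mathrm{Ext}^1$.

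The step I expect to be the main obstacle is the bridge between the two worlds: a single $\mathrm{Ext}^1$ class between two conformal block local systems is, a priori, a purely local piece of data on one moduli space, whereas a deformation of a fusion category is highly global categorical information. Reconciling the two requires exploiting the full network of conformal blocks across all surfaces coherently, and this is precisely where one expects the combination of non-Abelian Hodge theory on the proper compactifications with the factorization axioms of the modular functor to do the essential work.
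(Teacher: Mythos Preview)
Your proposal has the right ingredients (non-Abelian Hodge theory on the proper twisted moduli spaces, Ocneanu rigidity, the equivalence between functors and categories) but the strategy of attacking one $\mathrm{Ext}^1$ class at a time has a genuine gap, and it is exactly the one you flag at the end. A class in $\mathrm{Ext}^1(V_2,V_1)$ for two irreducible subquotients of a single conformal block $\Nu_g(\ul)$ does not, by itself, produce a deformation of the fusion category: the extension $E$ is not a conformal block, and there is no mechanism to propagate this local datum to coherent deformations of all the other $\Nu_{g'}(\ul')$ that respect the gluing, forgetful and permutation isomorphisms. Without that coherence you never land in the space of modular/ribbon/braided functors, so Ocneanu rigidity has nothing to act on. Your appeal to non-Abelian Hodge theory at this step is too vague to close the gap: Hodge theory constrains extension classes on a fixed space but does not, on its own, transport them across the whole tower of moduli spaces.

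The paper resolves this by reversing the logic. Rather than starting from an individual extension and trying to globalize it, one uses Simpson's theory to produce, for \emph{every} flat bundle on a smooth proper DM stack, a canonical polynomial family $\nabla_h = D + \eta_h$ with $\nabla_1=\nabla$ and $\nabla_0=D$ semisimple (\Cref{semisimplificationstacks}). The crucial property is that $\nabla\mapsto\nabla_h$ is functorial and compatible with tensor products, duals and pullbacks; this is precisely what guarantees that applying it simultaneously to all $\Nu_g(\ul)$ preserves the structure maps of the modular functor, yielding a genuine family $(\Nu_h)_h$ of functors and hence of categories $(\mathrm{C}_{\Nu,h})_h$. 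Ocneanu rigidity then forces $\mathrm{C}_{\Nu,0}\simeq\mathrm{C}_{\Nu}$, and full faithfulness of the functor-to-category equivalence pulls this back to $\Nu\simeq\Nu_0$, so each $(\Nu_g(\ul),\nabla)$ is isomorphic to its semisimplification. In short, the missing idea in your plan is this canonical, functorial deformation-to-semisimplification; it is what replaces the impossible task of promoting a single $\mathrm{Ext}^1$ class to a categorical deformation.
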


The question of whether semisimplicity holds in general was asked by Etingof and Varchenko in \cite[4.28]{etingofPeriodicQuasimotivicPencils2023}.
The proof we provide relies on non-Abelian Hodge theory and Ocneanu rigidity. See the proof outline below for more details (\ref{outline}).

%-------------------------------------------------------------------------------------------------------------------------

\subsection{Application to Hodge structures on conformal blocks}

%-------------------------------------------------------------------------------------------------------------------------

In the first version of \cite{godfardHodgeStructuresConformal2024}, we proved that for any modular or ribbon fusion category $\Cat$,
the associated bundles with flat connection $(\Nu_g(\ul),\nabla)$ over twisted moduli spaces of curves $\Mgrbt{g}{n}{r}{s}$
support rational variations of Hodge structures over some CM number fields, \emph{provided that they are all semisimple}.
By \Cref{mainresult} above, semisimplicity is always satisfied. Thus, any flat bundles $(\Nu_g(\ul),\nabla)$
associated to a modular or ribbon category supports a rational variation of Hodge structure over a CM number field.
The second version of \cite{godfardHodgeStructuresConformal2024} is updated to reflect this fact%
\footnote{The second version also contains the case of braided fusion categories that was omitted in the first one.}.

Existence of such Hodge structures has the following consequences for quantum representations, which are precisely the monodromies of the $(\Nu_g(\ul),\nabla)$.

\begin{theorem}[{\cite[3.20]{godfardHodgeStructuresConformal2024}}]
  Let $\Cat$ be a modular category and $\rho_g(\ul):\tPMod{S_g^n}\ra\GLn{d}{\C}$ an associated quantum representation.
  Then there exists a CM number field $K$ such that, up to conjugacy, $\rho_g(\ul)$ has image in $\GLn{d}{K}$
  and $\rho_g(\ul)$ preserves a non-degenerate Hermitian form $h$ defined on $K^d$. In particular, there exists $a+b=d$ such that
  $\rho_g(\ul)$ is conjugate in $\GLn{d}{\C}$ to a representation with image in the pseudo-unitary group $U(a,b)\subset \GLn{d}{\C}$.
  The same result holds for representations associated to ribbon or braided fusion categories.
\end{theorem}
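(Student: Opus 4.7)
The plan is to combine \Cref{mainresult} with the conditional existence of variations of Hodge structure on conformal block bundles established in \cite{godfardHodgeStructuresConformal2024}. Once both ingredients are in hand, the theorem is a formal consequence of the definition of a polarized $K$-rational variation of Hodge structure when $K$ is CM.

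First, I would apply \Cref{mainresult} to the modular fusion category $\Cat$. This yields semisimplicity of the flat bundle $(\Nu_g(\ul),\nabla)$ on the proper Deligne-Mumford stack $\Mgrbt{g}{n}{r}{s}$, whose monodromy is precisely $\rho_g(\ul)$. Semisimplicity is exactly the hypothesis required in the first version of \cite{godfardHodgeStructuresConformal2024} to promote $(\Nu_g(\ul),\nabla)$ to a rational polarized variation of Hodge structure over some CM number field $K$. Invoking that result yields the geometric object from which the algebraic assertions of the theorem follow.

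Next, I would read off the arithmetic and Hermitian statements from the VHS. The $K$-rationality means the underlying local system descends to a $K$-local system, so that, after conjugation in $\GLn{d}{\C}$, the image of $\rho_g(\ul)$ lies in $\GLn{d}{K}$. The polarization provides a monodromy-invariant non-degenerate pairing on this $K$-local system; because $K$ is CM it is Hermitian with respect to the complex conjugation of $K$, giving the desired form $h$ on $K^d$. For any complex embedding of $K$, extending scalars turns $h$ into a non-degenerate Hermitian form on $\C^d$ of signature $(a,b)$ for some $a+b=d$, and diagonalizing places the image of $\rho_g(\ul)$ in the pseudo-unitary group $U(a,b)$. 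The ribbon and braided cases are strictly parallel, with $\Mgrbt{g}{n}{r}{s}$ replaced by $\Mgrb{0}{n}{r}$ and $\Mrpo{n}{r}$ respectively, and use the corresponding parts of \Cref{mainresult} and of \cite{godfardHodgeStructuresConformal2024}.

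The main obstacle of this strategy is not the combination itself but the availability of its two ingredients: the semisimplicity statement, which is the main theorem of the present paper, and the CM-rational VHS, whose construction occupies \cite{godfardHodgeStructuresConformal2024}. Once both are granted, the only point requiring care is confirming that the polarization descends genuinely to $K$ rather than merely to $\R$, so that $h$ is truly $K$-Hermitian; this is built into the CM descent of the polarization in the companion paper and requires no new argument here.
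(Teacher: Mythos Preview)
Your proposal is correct and matches the paper's own treatment: the theorem is not proved in this paper but is quoted from \cite[3.20]{godfardHodgeStructuresConformal2024}, with the surrounding discussion explaining exactly the logic you give---\Cref{mainresult} removes the semisimplicity hypothesis needed in the companion paper's conditional construction of a CM-rational polarized variation of Hodge structure, from which the statements about $K$ and the Hermitian form follow. Your identification of the one point requiring care (descent of the polarization to $K$) is apt and, as you say, is handled in the companion paper.
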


It is natural to ask whether these results can be extended to the modular, ribbon or braided category itself. We conjecture the following.

\begin{conjecture}
  Let $\Cat$ be a modular, ribbon or braided fusion category over $\C$. Then $\Cat$ can be defined over some CM number field $K$
  and is Hermitian over $K$.
\end{conjecture}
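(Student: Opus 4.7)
The plan is to combine Ocneanu rigidity with the Hermitian structure at the representation level established by the preceding theorem. As a preliminary step, any fusion category $\Cat$ over $\C$ (braided, ribbon, or modular) should be definable over some number field $K_0$: its structure constants --- F and R matrices, ribbon twists --- satisfy a system of polynomial equations with integer coefficients, and by Ocneanu rigidity the solution set is discrete modulo gauge equivalence. Hence the $\Gal(\overline{\Q}/\Q)$-orbit of $\Cat$ is finite and $\Cat$ descends to $K_0$. This part is essentially contained in \cite{etingofFusionCategories2005}.

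To upgrade $K_0$ to a CM field, I would use reconstruction of $\Cat$ from a finite amount of quantum representation data. By rigidity, the structure constants of $\Cat$ can be read off from finitely many conformal block representations (for instance, braid group representations on a small number of marked points recover the F and R matrices, and genus one data recovers modularity). By the theorem above, each such representation $\rho_g(\ul)$ is definable over a CM number field, so after suitable conjugation $\Cat$ is definable over a field contained in the compositum of finitely many CM fields, from which one extracts a CM subfield $K$. Similarly, the nondegenerate Hermitian forms preserved by these representations should descend, via the same reconstruction, to nondegenerate Hermitian pairings on the morphism spaces of $\Cat$ compatible with composition, tensor product, and (where applicable) the braiding and ribbon twist, giving the desired Hermitian structure over $K$.

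The hardest step is the last: assembling the representation-level Hermitian forms into a coherent categorical Hermitian structure. On each irreducible summand of a conformal block, an invariant Hermitian form is determined only up to a nonzero real scalar, so one must make coherent global choices compatible with every modular functor identification --- factorization, gluing, cutting, disjoint union. A priori this is a nontrivial cocycle problem: the discrepancies between ad hoc normalizations define an $\R^*$-valued $2$-cocycle on the system of surfaces-with-markings, and the obstruction is its cohomology class. One natural route is to exploit connectedness of the relevant moduli spaces $\Mgrbt{g}{n}{r}{s}$ together with uniqueness (up to real scaling) of invariant Hermitian forms on irreducible representations, reducing the coherence condition to finitely many checks.

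An equivalent reformulation that may be cleaner is the following: a Hermitian structure on $\Cat$ amounts to a coherent equivalence between the modular functor of $\Cat$ and the modular functor of its complex conjugate (with reversed orientation of surfaces). The previous theorem provides such an equivalence representation by representation; the conjecture would follow from showing that these pointwise equivalences can be chosen functorially and compatibly with the modular functor axioms. Establishing this functoriality, most likely via a rigidity-plus-deformation argument à la \cite{etingofFusionCategories2005}, is where I expect the main difficulty to lie.
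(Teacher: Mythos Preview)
The statement you are addressing is a \emph{conjecture} in the paper, not a theorem: the paper offers no proof and explicitly presents it as an open question extending the representation-level result (Theorem~1.2 there) to the category itself. So there is nothing to compare your proposal against.

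That said, your sketch is a reasonable outline of why one might believe the conjecture, and you correctly flag the real obstacles. Two points deserve emphasis. First, in step~2 you pass from CM-definability of individual representations to CM-definability of $\Cat$ by ``reading off'' the $F$- and $R$-matrices from braid group actions. But the paper's theorem only gives each $\rho_g(\ul)$ over a CM field \emph{up to conjugacy}, and the conjugating matrices for different $(g,\ul)$ are a priori unrelated. The structure constants of $\Cat$ are extracted not from the representations alone but from the gluing isomorphisms of the modular functor (cf.\ the description of $f^b$ in \S\ref{subsectionfullfaithfulness}), and there is no guarantee those intertwiners are simultaneously CM-rational after the separate conjugations. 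This is already a coherence problem of the same flavor as the one you identify for Hermitian forms.

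Second, your reformulation of the Hermitian structure as an equivalence $\Nu\simeq\overline{\Nu}$ of modular functors is the right perspective, but note that the rigidity input available (Ocneanu, or the full faithfulness in \Cref{fullfaithfulnesses}) tells you that \emph{if} such an equivalence exists it is essentially unique --- it does not by itself produce one. The $\R^\times$-cocycle you describe is exactly the obstruction, and nothing in the paper's techniques (semisimplification via non-Abelian Hodge theory plus Ocneanu) touches it. So your proposal is best read as a plausibility argument and a roadmap of the difficulties, which is the appropriate status for a conjecture.
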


See \cite[II.5]{turaevQuantumInvariantsKnots2016} for the definition of Hermitian structure on such categories.
The author knows of no example of a braided fusion category which cannot be defined over a cyclotomic field.
In \cite{morrisonNoncyclotomicFusionCategories2012}, examples of fusion categories not definable over cyclotomic fields are constructed,
but they are not braided and their Drinfeld double are shown to be definable over cyclotomic fields.

%-------------------------------------------------------------------------------------------------------------------------

\subsection{Outline of the proof}\label{outline}

%-------------------------------------------------------------------------------------------------------------------------

We outline the proof for a modular fusion category $\Cat$. The ribbon and braided cases are similar.

The proof is in $2$ steps. The first step uses non-Abelian Hodge theory to construct a continuous family $(\Cat_h)_{h\in\C}$ of modular categories
with $\Cat_1=\Cat$ and where $\Cat_0$ is such that all associated mapping class group/braid group representations are semisimple.
The second step is to deduce from Ocneanu rigidity that the family $(\Cat_h)_{h\in\C}$ is trivial, i.e. $\Cat_h$ is equivalent to $\Cat$ for all $h$,
thus concluding that all quantum representations associated to $\Cat\simeq\Cat_0$ must be semisimple.

To construct $(\Cat_h)_{h\in\C}$, we use the notion of geometric modular functor, which is equivalent to that of modular fusion category
(\ref{fullfaithfulnesses}).
The geometric modular functor $\Nu$ associated to $\Cat$ consists of bundles with flat connection $(\Nu_g(\ul),\nabla)$
over twisted moduli spaces of curves $\Mgrbt{g}{n}{r}{s}$, \emph{together with some compatibility isomorphisms} (\ref{definitiongeometricmodularfunctor}).

The quantum representations associated to $\Cat$ are then exactly the monodromies of the $(\Nu_g(\ul),\nabla)$.
Hence, to construct the family $(\Cat_h)_{h\in\C}$, we will deform the connections $\nabla$ on the $\Nu_g(\ul)$ to semisimple connections.
To that end, we use the following very general result on the existence of a canonical semisimplification of flat connections
over smooth proper DM stacks. It is a corollary of Simpson's study of non-semisimple local systems on compact Kähler manifolds.

\begin{proposition*}[\ref{semisimplificationstacks}]
  To any flat connection $\nabla$ on a $\Cinf$ bundle $E$ on a smooth proper DM stack $X$ over $\C$
  is associated a canonical, polynomial in $h\in\C$, family of flat connections $\nabla_h=D+\eta_h$ with $D$ flat semisimple, $\nabla_1=\nabla$ and $\nabla_0=D$.
  For each $h\in\C$, the association $(E,\nabla)\mapsto (E,\nabla_h)$ is a functor, compatible with taking tensor products, duals and pullbacks
  along algebraic maps.
\end{proposition*}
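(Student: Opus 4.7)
The plan is to deduce the statement from Simpson's non-Abelian Hodge theory on compact Kähler manifolds, and then extend the construction to smooth proper DM stacks over $\C$.

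First, I would treat the case where $X$ is a compact Kähler manifold. Given a flat $\Cinf$ bundle $(E,\nabla)$, form its Jordan-Hölder graded $(E_{ss},D)$: a semisimple flat bundle, hence carrying a harmonic metric by Corlette's theorem. A smooth splitting of the Jordan-Hölder filtration identifies $E$ and $E_{ss}$ as $\Cinf$ bundles and writes $\nabla=D+\eta$ for an $\eta\in\Omega^1(X,\End{E})$ that is strictly upper-triangular with respect to the filtration. I would then solve perturbatively for $\eta_h=h\eta+h^2\eta_2+\cdots$ so that $\nabla_h:=D+\eta_h$ is flat at each order in $h$: the obstructions at each step vanish by Simpson's harmonic analysis of $(E_{ss},D)$, allowing $\eta_k$ to be chosen canonically. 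The series terminates as a polynomial because iterated wedge products of $\eta$ vanish once the number of factors exceeds the length of the Jordan-Hölder filtration. This yields the desired canonical polynomial family with $\nabla_1=\nabla$ and $\nabla_0=D$.

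To extend to a smooth proper DM stack $X$ over $\C$, I would view the analytification as a compact Kähler complex orbifold; all ingredients above (Jordan-Hölder filtration, Corlette-Simpson correspondence, the perturbative construction of $\eta_h$) extend to the orbifold setting, being either local in uniformizing charts or governed by equivariant elliptic analysis. Alternatively, since the DM stacks relevant in this paper all admit a finite étale cover $Y\ra X$ by a smooth projective scheme (e.g.\ via level structures on moduli of curves), one may perform the construction on $Y$ and descend to $X$, with canonicity of each step ensuring equivariance of the descent data.

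Functoriality in $(E,\nabla)$ is automatic from the canonicity at each step. Compatibility with tensor products follows because tensor products of semisimple flat bundles on a compact Kähler manifold are semisimple (by Simpson's theorem), so the Jordan-Hölder graded is multiplicative under $\otimes$; one then verifies that the perturbative construction of $\eta_h$ is compatible with $\otimes$ by naturality. Duals and algebraic pullbacks are handled analogously. The main obstacle is the Kähler-manifold step: verifying the vanishing of the obstructions in the perturbative construction via Simpson's harmonic analysis, so that the polynomial family $\nabla_h$ exists canonically. Once this analytic input is granted, the stack extension and functoriality follow essentially formally.
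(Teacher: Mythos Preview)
Your outline is in the right spirit but has two genuine gaps. First, in the Kähler case, you never say what makes the decomposition $\nabla=D+\eta$ or the higher $\eta_k$ \emph{canonical}. A smooth splitting of the Jordan--Hölder filtration is a choice, and at each perturbative step the solutions of $D\eta_k=-\sum_{i+j=k}\eta_i\wedge\eta_j$ form an affine space over $D$-closed forms; ``obstructions vanish'' is not enough. The paper's answer is precisely this: among all such decompositions one imposes $D'\eta=0$ (for the operator $D'$ coming from the harmonic metric on the semisimple part), and this pins down $\eta$ and each $\eta_h$ uniquely via the isomorphism of categories $\hCDz\simeq\hCHz$ deduced from the $DD'$-Lemma. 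Functoriality and compatibility with $\otimes$, duals, and pullbacks then follow because $D'$-closedness is preserved by all of these operations---not from an unspecified ``canonicity at each step.''

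Second, your extension to smooth proper DM stacks does not work as stated. Not every smooth proper DM stack over $\C$ is Kähler (the paper explicitly flags this, and it is why the $DD'$-Lemma route is unavailable directly). Your alternative---a finite étale cover by a smooth projective scheme---also fails in general: the twisted moduli spaces $\Mgrp{g}{n}{r}$ are root stacks along the boundary and carry nontrivial $\mu_r$-inertia there, so no finite étale cover by a scheme exists. The paper instead uses Simpson's simplicial hypercover $Z_\bullet\to X$ by smooth projective varieties (not an étale cover, but satisfying descent for $\Cinf$ bundles and connections): one applies the Kähler construction on each $Z_k$, checks compatibility with the simplicial structure via functoriality, and descends. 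For an arbitrary algebraic map $p:Y\to X$ from a smooth projective variety, compatibility of $p^*(\eta_h)$ with $(p^*\eta)_h$ is then verified by comparing both after pullback to a resolution of $Y\times_X Z_0$.
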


The fact that this semisimplification is compatible with tensor products, duals and pullbacks implies that, for each $h\in\C$, the collection
of flat bundles $(\Nu_g(\ul),\nabla_h)$ fits into a geometric modular functor, that we will denote $\Nu_h$.
Note that all monodromies of flat bundles comprising $\Nu_0$ are semisimple.
As the notions of geometric modular functor and modular fusion category are equivalent (see \Cref{fullfaithfulnesses} for details),
we get the desired family $(\Cat_h)_{h\in\C}$ of modular categories.

We can then conclude using Ocneanu rigidity (\cite[2.28]{etingofFusionCategories2005}, \Cref{Ocneanurigidity}).
Indeed, by the Corollary below\footnote{One can see from the definition of $(\Cat_h)_{h\in\C}$ that only the associators vary.},
$\Cat\simeq \Cat_0$ and in particular all quantum representations of $\Cat$ are semisimple.

\begin{corollary*}[Ocneanu rigidity, \ref{corollarydeformations}]
  Let $C$ be a ribbon or braided category over $\C$.
  Then for any continuous family of ribbon or braided fusion categories $(C_t)_{t\in X}$, with $C_0=C$, $X$ path-connected,
  and where only the associators vary, $C_t$ is isomorphic to $C$ for all $t$.
\end{corollary*}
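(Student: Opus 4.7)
The plan is to interpret the family $(C_t)_{t\in X}$ as a continuous map into the moduli space $M$ of braided (resp.\ ribbon) fusion category structures on the fixed underlying data, and then use Ocneanu rigidity to show that $M$ is discrete, forcing the map to be constant.

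First, I would set up the moduli space. By hypothesis the fusion ring (i.e.\ the underlying abelian category together with the tensor product on objects and morphisms), the braiding, and the ribbon twist of $C_t$ are independent of $t$; only the associator $\alpha_t$ varies. Let $V$ denote the affine variety of associators on this fixed data satisfying the pentagon axiom and compatible with the fixed braiding and ribbon structure; note the hexagon and ribbon axioms for $\alpha_t$ follow automatically once they hold for $\alpha_0$, since only $\alpha$ changes. Let $G$ be the algebraic group of tensor-compatible gauge transformations preserving the fixed braiding and ribbon twist. The moduli of braided (resp.\ ribbon) fusion category structures on the fixed data is then the analytic quotient $M=V/G$, which is of finite type.

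Next, I would observe that $t\mapsto[\alpha_t]\in M$ is continuous. By Ocneanu rigidity (\cite[2.28]{etingofFusionCategories2005} in its braided/ribbon form), every braided (resp.\ ribbon) fusion category is infinitesimally rigid: the relevant degree-three Davydov--Yetter cohomology, which is precisely the Zariski tangent space to $M$ at the corresponding point, vanishes. Applied to every $C_t$, this yields that $M$ has vanishing tangent space at every point of the image of the family, hence the image is a discrete subset of $M$. Since $X$ is path-connected and the image is discrete, the map is constant, so $C_t\simeq C_0=C$ for every $t\in X$.

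The main obstacle, as I see it, is not the overall strategy but the bookkeeping in the middle step: one must upgrade pointwise infinitesimal rigidity to honest discreteness of the image, which requires that $M$ really is an analytic space of finite type in a neighborhood of the image and that the Davydov--Yetter vanishing really computes the tangent space to $M$ at each point. A secondary technical point is that Ocneanu rigidity is typically cited for plain fusion categories, so one needs its enhancement to the braided/ribbon setting; this follows by running the Davydov--Yetter vanishing argument on the deformation complex adapted to include the braiding and ribbon data, and it is this enhancement that is invoked via the statement of \Cref{Ocneanurigidity}.
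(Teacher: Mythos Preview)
Your approach matches the paper's: its proof simply cites \cite[7.7]{godfardHodgeStructuresConformal2024} (itself a corollary of Ocneanu rigidity) for constancy along arcs, then invokes path-connectedness. Your sketch is essentially what lies behind that citation.

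Two points on the details. First, the claim that ``the hexagon and ribbon axioms for $\alpha_t$ follow automatically once they hold for $\alpha_0$, since only $\alpha$ changes'' is false: the hexagon axiom involves the associator, so it is a genuine constraint on each $\alpha_t$. This does no damage to the argument, since by hypothesis each $C_t$ \emph{is} braided (resp.\ ribbon); but it means your $V$ is honestly cut out by pentagon and hexagon together, not pentagon alone. Relatedly, \Cref{Ocneanurigidity} as stated in the paper is for plain fusion categories; the braided/ribbon enhancement you invoke is true and follows from the full Davydov--Yetter vanishing in \cite{etingofFusionCategories2005}, but it is not literally the content of \Cref{Ocneanurigidity}.

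Second, the passage through the quotient $M=V/G$ is where the argument is shakiest, and your own caveat is well placed: $M$ need not be a reasonable analytic space, so ``vanishing tangent space $\Rightarrow$ discrete'' is not immediate. The cleaner route stays in $V$. Davydov--Yetter vanishing says that at each $\alpha\in V$ the Zariski tangent space $T_\alpha V$ coincides with the tangent space to the $G$-orbit through $\alpha$; since orbits are smooth and locally closed, a dimension count forces $V$ to be smooth at $\alpha$ and the orbit to be Zariski-open in $V$. Thus every $G$-orbit is open, hence also closed, and $V$ decomposes as a disjoint union of orbits. The continuous map $t\mapsto \alpha_t$ from the connected space $X$ then lands in a single orbit, giving $C_t\simeq C_0$ for all $t$.
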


%-------------------------------------------------------------------------------------------------------------------------

\subsection{Organization of the paper}

%-------------------------------------------------------------------------------------------------------------------------

In \Cref{sectionfunctors}, we review twisted moduli spaces of curves, and geometric modular/genus $0$ modular/braided functors.
Their relationship with modular/ribbon/braided categories is detailed in \Cref{subsectionfullfaithfulness}.

\Cref{sectionSimpson} reviews what we need from non-Abelian Hodge theory and Simpson's study of extensions of semisimple local systems
on compact Kähler manifolds. The aim of the section is to explain the existence of a canonical semisimplification
of flat connections on smooth proper DM stacks.

\Cref{sectionOcneanu} is devoted to the statement of Ocneanu rigidity, and \Cref{sectionproof} to the proof of \Cref{mainresult}.

%-------------------------------------------------------------------------------------------------------------------------

\subsection{Acknowledgements}

%-------------------------------------------------------------------------------------------------------------------------

This paper forms part of the PhD thesis of the author.
The author thanks Julien Marché for his help in writing this paper.
The author is thankful to Yohan Brunebarbe, Pavel Etingof, Nicolas Tholozan and Aleksander Zakharov for helpful discussions.

%-------------------------------------------------------------------------------------------------------------------------

\section{Modular and braided functors}\label{sectionfunctors}

%-------------------------------------------------------------------------------------------------------------------------

In this section, we review the notion of geometric modular and braided functors, a now classical reference for which is the book
of Bakalov and Kirillov \cite[6.4]{bakalovLecturesTensorCategories2000}. The definitions we use are equivalent to their's,
but differ in that we use connections on twisted compactifications of moduli spaces of curves instead of regular singular connections.
This simplifies the use of results from non-Abelian Hodge theory.
The section is a rather quick review, and more details can be found in our paper on Hodge structures \cite[2. and 5.]{godfardHodgeStructuresConformal2024}.

%-------------------------------------------------------------------------------------------------------------------------

\subsection{Twisted moduli spaces of curves}\label{twistedmodulispaces}

%-------------------------------------------------------------------------------------------------------------------------

\begin{notation}
  For $g,n\geq 0$ such that $2g-2+n>0$, we will denote by $\Mg{g}{n}$ the moduli stack of smooth curves of genus $g$ with $n$ distinct marked 
  points over $\C$, and by $\oMg{g}{n}$ the Deligne-Mumford compactification classifying stable nodal curves with $n$ distinct marked points
  on their smooth locus.
\end{notation}

We will work with variations of the Deligne-Mumford compactifications.
These depend on a integer $r\geq 1$ and classify $r$-twisted curves in the sense of Kontsevich, Abramovic and Vistoli
\cite{abramovichCompactifyingSpaceStable2002}. For a reference on their definition, see Chiodo's article \cite[1.3]{chiodoStableTwistedCurves2008}.

\begin{notation}
  For $g,n\geq 0$ such that $2g-2+n>0$, denote by $\Mgrp{g}{n}{r}$
  the moduli space of stable nodal $r$-twisted curves of genus $g$ with $n$ distinct smooth marked points \cite[th. 4.4]{chiodoStableTwistedCurves2008}.
\end{notation}

Note that $\Mgrp{g}{n}{1}=\oMg{g}{n}$.

\begin{remark}\label{rootstack}
  One can give an alternative description of these stacks using the root stack construction.
  Let $D_i\subset \oMg{g}{n}$ for $i=1,\dotsc,k$ be the components
  of the boundary divisor. Then $\Mgrp{g}{n}{r}$ is obtained from $\oMg{g}{n}$ by taking $r$-th root stack independently locally
  on each $D_i$ (see \cite[2.3, 4.5]{chiodoStableTwistedCurves2008})
  \begin{equation*}
    \Mgrp{g}{n}{r}=\oMg{g}{n}\left[\sum_i \frac{D_i}{r}\right].
  \end{equation*}
  With this description, it is clear that the fundamental group of $\Mgrp{g}{n}{r}$ is equivalent
  to the quotient $\PMod{S_{g,n}}/\langle T_\delta^r\mid \delta\rangle$ of the pure mapping class group of the $n$ times \emph{punctured}
  genus $g$ surface by all $r$-th powers of Dehn twists.
\end{remark}

\begin{notation}
  For $g,n\geq 0$ such that $2g-2+n>0$, denote by $\Mgrb{g}{n}{r}$
  the moduli space of stable nodal $r$-twisted curves of genus $g$ with $n$ distinct smooth order $r$ stacky points
  \emph{and a section at each stacky point}.
  We will use the convention $\Mgrb{0}{2}{r}\simeq \Br{r}$, see \cite[2.26]{godfardHodgeStructuresConformal2024}.
\end{notation}

\begin{remark}\label{boundary}
  The stack $\Mgrb{g}{n}{r}$ is a $\mu_r^n$-gerbe over $\Mgrp{g}{n}{r}$, which can be described as follows.
  Let $\Sigma_i\subset \Cgrp{g}{n}{r}$ be the $i$-th stacky marked point in the universal curve $\Cgrp{g}{n}{r}\ra \Mgrp{g}{n}{r}$.
  Then $\Sigma_i$ is a $\mu_r$-gerbe over $\Mgrp{g}{n}{r}$ and $\Mgrb{g}{n}{r}$ is the product over $\Mgrp{g}{n}{r}$ of the $\Sigma_i$.
  With this description, we see that the fundamental group of $\Mgrb{g}{n}{r}$ is isomorphic
  to the quotient $\PMod{S_g^n}/\langle T_\delta^r\mid \delta\rangle$ of the pure mapping class group of the
  genus $g$ surface with $n$ \emph{boundary components} by all $r$-th powers of Dehn twists.
\end{remark}

The spaces $\Mgrb{0}{n}{r}$ are those necessary to define genus $0$ modular functors. For full modular functors,
we need yet another variation: the stacks $\Mgrbt{g}{n}{r}{s}$. These spaces depend also on another integer $s\geq 1$, and
$\Mgrbt{g}{n}{r}{s}$ is a $\mu_s$-gerbe over $\Mgrb{g}{n}{r}$. At the level of fundamental groups, they correspond
to the stable central extension of mapping class groups by $\Z$, quotiented by $s\Z$.
See \cite[2.5]{godfardHodgeStructuresConformal2024} for definitions and details on this.

Lastly, we need a slight variation on $\Mgrb{0}{n}{r}$ to define braided functor.

\begin{notation}\label{notationonemgr}
  For $n\geq 1$, denote by $\Mgrpo{0}{n}{r}$ the moduli space of connected stable nodal $r$-twisted curves
  of genus $0$ with $n$ distinct smooth points and $1$ order $r$ stacky point.
  \emph{and a section at the stacky point}. We will use the convention $\Mgrpo{0}{1}{r}=*$.
\end{notation}

\begin{remark}
  The space $\Mgrpo{0}{n}{r}$ can de identified with $\Sigma_0\subset \Cgrp{0}{n+1}{r}$ as defined in \Cref{boundary},
  where we number the markings form $0$ to $n$. Its fundamental group can be identified with
  $\PMod{D_n}/\langle T_\delta^r\mid \delta\rangle$, where $D_n$ is the closed disk with $n$ points removed.
  The pure mapping class group $\PMod{D_n}$ is the pure braid group $\PB_n$.
\end{remark}

Gluing and forgetful maps between the Deligne-Mumford compactifications $\oMg{g}{n}$ have analogs for the $\Mgrb{g}{n}{r}$
and the $\Mgrbt{g}{n}{r}{s}$. See \Cref{definitiongeometricmodularfunctor} below. These can either be defined from the interpretation as
a moduli spaces of twisted curves, or directly from those on the $\oMg{g}{n}$ using the root stack construction
mentioned in \Cref{rootstack}.

%-------------------------------------------------------------------------------------------------------------------------

\subsection{Modular functors}

%-------------------------------------------------------------------------------------------------------------------------

We reproduce here the definition of geometric modular functor of \cite[2.4]{godfardHodgeStructuresConformal2024}.
This definition is equivalent to that given by Bakalov-Kirillov in \cite[6.4.1, 6.7.6]{bakalovLecturesTensorCategories2000}.

\begin{definition}[Modular Functor]\label{definitiongeometricmodularfunctor}
  Let $\Lambda$ be a finite set with involution $\lambda\mapsto\lambda^\dagger$ and preferred fixed point $0\in\Lambda$.
  Let $r,s\geq 1$ be integers.
  Then a geometric modular functor with level $(r,s)$ is the data,
  for each $g,n\geq 0$, $(g,n)\neq (0,0),(0,1),(1,0)$, and $\ul\in\Lambda^n$,
  of a bundle with flat connection $(\Nu_g(\ul),\nabla)$ over $\Mgrbt{g}{n}{r}{s}$, together with some isomorphisms
  described below.
  \begin{description}
      \item[(G-sep)] For each gluing map
      \begin{equation*}
        q:\Mgrbt{g_1}{n_1+1}{r}{s}\times\Mgrbt{0}{2}{r}{s}\times\Mgrbt{g_2}{n_2+1}{r}{s}\lra \Mgrbt{g_1+g_2}{n_1+n_2}{r}{s}
      \end{equation*}
      and each $\ul$, an isomorphism preserving the connections
      \begin{equation*}
        q^*\Nu_{g_1+g_2}(\lambda_1,\dotsc,\lambda_n)\simeq \bigoplus_\mu\Nu_{g_1}(\lambda_1,\dotsc,\lambda_{n_1},\mu)\otimes
        \Nu_0(\mu,\mu^\dagger)^\vee\otimes
        \Nu_{g_2}(\lambda_{n_1+1},\dotsc,\lambda_n,\mu^\dagger);
      \end{equation*}
      \item[(G-nonsep)] For each gluing map
      \begin{equation*}
        p:\Mgrbt{g-1}{n+2}{r}{s}\times\Mgrbt{0}{2}{r}{s}\lra \Mgrbt{g}{n}{r}{s}
      \end{equation*}
      and each $\ul$, an isomorphism preserving the connections
      \begin{equation*}
        p^*\Nu_g(\lambda_1,\dotsc,\lambda_n)\simeq \bigoplus_\mu\Nu_{g-1}(\lambda_1,\dotsc,\lambda_n,\mu,\mu^\dagger)
        \otimes \Nu_0(\mu,\mu^\dagger)^\vee;
      \end{equation*}
      \item[(N)] For each forgetful map $f:\Mgrbt{g}{n+1}{r}{s}\ra\Mgrbt{g}{n}{r}{s}$, and each $\ul$, an isomorphism preserving the connections
      \begin{equation*}
        f^*\Nu_g(\lambda_1,\dotsc,\lambda_n)\simeq \Nu_g(\lambda_1,\dotsc,\lambda_n,0)
      \end{equation*}
      and a canonically isomorphism $(\Nu_0(0,0),\nabla)\simeq (\cO,d)$ (trivial flat bundle);
      \item[(Perm)] For each $\ul\in\Lambda^n$ and permutation $\sigma\in S_n$, an isomorphism
      \begin{equation*}
        \Nu_g(\lambda_1,\dotsc,\lambda_n)\simeq \sigma^*\Nu_g(\lambda_{\sigma(1)},\dotsc,\lambda_{\sigma(n)}).
      \end{equation*}
  \end{description}
  The isomorphisms of \textbf{(G-sep)}, \textbf{(G-nonsep)}, \textbf{(N)} and \textbf{(Perm)}
  are to be compatible with each other and repeated applications.
  Moreover, we ask for the gluing to be symmetric in the sense that
  for each gluing isomorphism above, the change of variable $\mu\mapsto\mu^\dagger$
  on the right-hand side has the same effect as permuting the summands and applying the \textbf{(Perm)} isomorphisms
  $\Nu_0(\mu,\mu^\dagger)\simeq\Nu_0(\mu^\dagger,\mu)$ induced by $\sigma\in S_2\setminus\{\id\}$.
  
  The functor is also assumed to verify the non-degeneracy axiom
  \begin{description}
      \item[(nonD)] For each $\lambda$, $\Nu_0(\lambda,\lambda^\dagger)\neq 0$.
  \end{description}
\end{definition}

Sometimes, we will shorten $\Nu_0(\ul)$ to $\Nu(\ul)$.
Because $\Mgrbt{g}{n}{r}{s}$ is a $\mu_s$-gerbe, $\mu_s$ acts on the fibers of $\Nu_g(\ul)$.
Using the gluing axiom, one sees that this action is by scalars and independent of $g$ and $\ul$.
Hence to each modular functor $\Nu$ is associated a complex number $c\in\C$ corresponding to the action of $e^{2\pi i/s}$,
called the \textbf{central charge} of $\Nu$.

\begin{remark}
  We use somewhat non-standard gluing maps involving $\Mgrbt{0}{2}{r}{s}$: the points are not directly glued together, but are first glued to the marked
  points of a twice marked sphere, which is then contracted as it is an unstable component in the image. Adding it may seem pointless.
  However, we use it to have a more compact way to deal with the symmetry of the gluing. Indeed, even when $\mu=\mu^\dagger$,
  there are some examples for which the permutation isomorphism $\Nu_0(\mu,\mu)\simeq\Nu_0(\mu,\mu)$ induced by $\sigma\in S_2\setminus\{\id\}$
  is not the identity. See \cite[rmk. 2.7]{godfardHodgeStructuresConformal2024} for more details.
\end{remark}

\begin{definition}[Genus $0$ Modular Functor]
  Let $\Lambda$ be a finite set with involution $\lambda\mapsto\lambda^\dagger$ and preferred fixed point $0\in\Lambda$.
  Let $r\geq 1$ be an integer.
  Then a geometric genus $0$ modular functor with level $r$ is the data,
  for each $n\geq 2$ and $\ul\in\Lambda^n$,
  of a bundle with flat connection $(\Nu(\ul),\nabla)$ over $\Mgrb{0}{n}{r}$ together with isomorphisms
  as in \Cref{definitiongeometricmodularfunctor}, with the $\Mgrbt{0}{n}{r}{s}$ replaced by the $\Mgrb{0}{n}{r}$,
  satisfying all the same axioms\footnote{Note that \textbf{(G-nonsep)} is vacuous in genus $0$.}.
\end{definition}

\begin{remark}\label{remarklevel}
  The choice of level is not significant in the definition of (genus $0$) modular functor.
  Indeed, if $r'$ is a multiple of $r$ and $s'$ a multiple of $s$, pullback along the maps $\Mgrbt{g}{n}{r'}{s'}\ra\Mgrbt{g}{n}{r}{s}$
  produces a modular functor of level $(r',s')$ from a modular functor of level $(r,s)$.
  Similarly for genus $0$ modular functors.
\end{remark}

%-------------------------------------------------------------------------------------------------------------------------

\subsection{Braided functors}

%-------------------------------------------------------------------------------------------------------------------------

The gluing and forgetful maps between the moduli spaces $\oMg{0}{n+1}$ induce similar maps between the stacks $\Mrpo{n}{r}$.
See \cite[2.3.3 and Figure 1.1]{godfardConstructionHodgeStructures2024} for more explanations.

\begin{definition}[Braided Functor, compare with {\cite[2.14, 2.15]{godfardHodgeStructuresConformal2024}}]\label{braidedfunctor}
  Let $\Lambda$ be a finite set and $r\geq 1$ an integer.
  A geometric braided functor with level $r$ is the data,
  for each $n\geq 1$, $\ul\in\Lambda^n$ and $\mu\in \Lambda$,
  of a bundle with flat connection $(\Nu(\mu;\ul),\nabla)$ over $\Mrpo{n}{r}$, together with some isomorphisms
  described below.
  \begin{description}
      \item[(G)] For each gluing map
      \begin{equation*}
        q:\Mrpo{n_1+1}{r}\times\Mrpo{n_2}{r}\lra \Mrpo{n}{r}
      \end{equation*}
      and each $\mu,\ul$, an isomorphism preserving the connections
      \begin{equation*}
        q^*\Nu(\mu;\lambda_1,\dotsc,\lambda_n)\simeq \bigoplus_\nu\Nu(\mu;\lambda_{1},\dotsc,\lambda_{n_1},\nu)
        \otimes\Nu(\nu;\lambda_{n_1+1},\dotsc,\lambda_{n});
      \end{equation*}
      \item[(N)] For each forgetful map $f:\Mrpo{n+1}{r}\ra\Mrpo{n}{r}$, and each $\mu,\ul$, an isomorphism preserving the connections
      \begin{equation*}
        f^*\Nu(\lambda_1,\dotsc,\lambda_n)\simeq \Nu(\lambda_1,\dotsc,\lambda_n,0),
      \end{equation*}
      and for each $\lambda\in\Lambda$ a canonically isomorphism $(\Nu(\lambda;\lambda),\nabla)\simeq (\cO,d)$ (trivial flat bundle);
      \item[(Dual)] For each $\lambda$ there exists a unique $\mu$ such that $\Nu(0;\lambda,\mu)\neq 0$.
      This $\mu$ will be denoted $\lambda^\dagger$;
      \item[(Perm)] For each $\mu,\ul\in\Lambda^n$ and permutation $\sigma\in S_n$, an isomorphism
      \begin{equation*}
        \Nu(\mu;\lambda_1,\dotsc,\lambda_n)\simeq \sigma^*\Nu(\mu;\lambda_{\sigma(1)},\dotsc,\lambda_{\sigma(n)}).
      \end{equation*}
  \end{description}
  The isomorphisms of \textbf{(G)}, \textbf{(N)} and \textbf{(Perm)}
  are to be compatible with each other and repeated applications.
\end{definition}

For isomorphisms of \textbf{(N)}, we remind the reader of the convention $\Mrpo{1}{r}=*$.
\Cref{remarklevel} on the unimportance of the choice of the level also applies to geometric braided functors.

%-------------------------------------------------------------------------------------------------------------------------

\subsection{Relation with modular/ribbon/braided fusion categories}\label{subsectionfullfaithfulness}

%-------------------------------------------------------------------------------------------------------------------------

For definitions of monoidal category, rigid monoidal category, fusion category, braided monoidal category,
twist on a braided monoidal category and modular category, see \cite[2.1, 2.10, 4.1, 8.1, 8.10 and 8.13]{etingofTensorCategories2015}.
In these notes, the base field for such categories will always be $\C$.

The natural category of braided tensor categories is not a $1$-category. However, here we work with fusion categories and
we only care about isomorphisms of fusion categories with structure, not all morphisms. In this context,
we can produce a groupoid of braided/ribbon/modular fusion categories that is a $1$-groupoid by imposing restrictions on
how morphisms behave on the underlying structures of linear category.
These restrictions are benign: any braided/ribbon/modular fusion category is equivalent to one in the groupoids $\Braid/\Rib/\Modcat$ defined below.

\begin{notation}
  For $\Lambda$ a finite set, $\VL$ denotes the category $(\mathrm{Vect}_\C)^\Lambda$ of $\Lambda$-colored vector spaces.
  The object of $\VL$ corresponding to the vector space $\C$ colored by $\lambda\in\Lambda$ will be denoted $[\lambda]$.
\end{notation}

\begin{definition}[{\cite[5.5]{godfardHodgeStructuresConformal2024}}]\label{definitionribbon}
  Let $\Rib$ be the groupoid whose:
  \begin{description}
    \item[(1)] objects are pairs $(\Lambda,\VL)$, where $\Lambda$ is some finite set, and $\VL$ is endowed with
    the structure of a ribbon fusion category;
    \item[(2)] morphisms from $(\Lambda_1,\VLone)$ to $(\Lambda_2,\VLtwo)$ are pairs $(f,\phi)$, with $f:\Lambda_1\ra\Lambda_2$ a bijection and 
    $\phi:\otimes_1\simeq f^*\otimes_2$ a natural isomorphism, such that they induce a monoidal isomorphism $\VLone\ra\VLtwo$
    compatible with braidings and twists. In other words, morphisms form $(\Lambda_1,\VLone)$ to $(\Lambda_2,\VLtwo)$ are ribbon isomorphisms that
    are induced by a bijection $f:\Lambda_1\ra\Lambda_2$ at the level of the $\C$-linear categories and are strict on tensor units.
  \end{description}
  One similarly defines the groupoid $\Braid$ of braided fusion categories and the full sub-groupoid $\Modcat\subset \Rib$
  of modular fusion categories.
\end{definition}

\begin{remark}
  A fusion structure on $\VL$ induces a unique involution $\lambda\mapsto\lambda^\dagger$ of $\Lambda$
  and the choice of a preferred fixed point $0\in\Lambda$ of this involution, by imposing that for each $\lambda$, $[\lambda]^*\simeq [\lambda^\dagger]$
  and $1\simeq [0]$.
\end{remark}

\begin{definition}
  For $c\in\C^\times$, will denote by $\GMod_c$ the groupoid of geometric modular functors with central charge $c$,
  where we identify modular functors of different levels as in \Cref{remarklevel}.

  An isomorphism between $\Nu$ and $\Nu'$ in $\GMod_c$ is a bijection $\phi:\Lambda\simeq\Lambda'$ preserving the involution and $0$ together with
  a family of isomorphisms $\Nu_g(\ul)\simeq\Nu'_g(\phi(\ul))$ compatible with gluing, forgetful, normalization and permutation isomorphisms.

  We will use the notation $\GMod^0$ for the groupoid of genus $0$ geometric modular functors, up to change of level,
  and the notation $\BraidFun$ for the groupoid of geometric braided functors, up to change of level.
\end{definition}

One can associate to a geometric (genus $0$) modular functor over $\C$ a ribbon weakly fusion category and to a geometric braided functor over $\C$ a
braided weakly fusion category, see \cite[5.5.1]{bakalovLecturesTensorCategories2000} and \cite[5.]{godfardHodgeStructuresConformal2024}.
Moreover, Etingof and Penneys recently proved the long-standing conjecture that braided weakly fusion categories are braided fusion categories
\cite{etingofRigidityNonnegligibleObjects2024}. Together with the work of Bakalov and Kirillov \cite[5.4.1, 6.7.13]{bakalovLecturesTensorCategories2000},
this implies that to a modular functor is associated a modular fusion category, to a genus $0$ modular functor is associated a ribbon fusion category,
and to a braided functor is associated a braided fusion category.
These can be put into the commutative diagram of categories below.
For more details, see \cite[5.]{godfardHodgeStructuresConformal2024}. 
% https://q.uiver.app/#q=WzAsNixbMCwwLCJcXGJpZ3NxY3VwX2NcXEdNb2RfYyJdLFsxLDAsIlxcR01vZF4wIl0sWzIsMCwiXFxCcmFpZEZ1biJdLFsxLDEsIlxcUmliIl0sWzIsMSwiXFxCcmFpZC4iXSxbMCwxLCJcXE1vZGNhdCJdLFswLDFdLFsxLDMsIlxcbWF0aHJte2Z9XjAiXSxbMSwyXSxbMiw0LCJcXG1hdGhybXtmfV5iIl0sWzMsNF0sWzAsNSwiXFxzcWN1cF9jXFxtYXRocm17Zn1fYyIsMl0sWzUsM11d
\[\begin{tikzcd}
	{\bigsqcup_c\GMod_c} & {\GMod^0} & \BraidFun \\
	\Modcat & \Rib & {\Braid.}
	\arrow[from=1-1, to=1-2]
	\arrow["{\sqcup_c\mathrm{f}_c}"', from=1-1, to=2-1]
	\arrow[from=1-2, to=1-3]
	\arrow["{\mathrm{f}^0}", from=1-2, to=2-2]
	\arrow["{\mathrm{f}^b}", from=1-3, to=2-3]
	\arrow[from=2-1, to=2-2]
	\arrow[from=2-2, to=2-3]
\end{tikzcd}\]

The Reshetikhin-Turaev construction \cite{reshetikhinInvariants3manifoldsLink1991} provides a functor the other way:
given a modular category, it provides a $2+1$ dimensional TQFT and hence a modular functor. Here is a statement encapsulating what we will need from this construction
and its simpler genus $0$ variants.

\begin{theorem}[{\cite[5.9]{godfardHodgeStructuresConformal2024}, based on \cite[5.4.1, 6.7.13]{bakalovLecturesTensorCategories2000}}]\label{fullfaithfulnesses}
  The functors $\mathrm{f}^0$ and $\mathrm{f}^b$ are equivalences.
  The functors $\mathrm{f}_c$ are fully faithful, and every element of $\Modcat$
  is in the essential image of some $\mathrm{f}_c$ for a well chosen value\footnote{See \cite[5.7.10]{bakalovLecturesTensorCategories2000} for the exact values $c$ may take.}
  of $c$.
\end{theorem}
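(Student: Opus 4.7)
The plan is to construct in each of the three cases an explicit quasi-inverse functor to $\mathrm{f}^0$, $\mathrm{f}^b$, and $\mathrm{f}_c$, using parallel transport on the moduli spaces $\Mrpo{n}{r}$, $\Mgrb{0}{n}{r}$, and $\Mgrbt{g}{n}{r}{s}$ to read off the categorical structure, then to deduce the modular case from the genus $0$ one via the non-separating gluing axiom.

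First I would build, from a geometric braided functor $\Nu$, a braided fusion category structure on $\VL$ as follows. For each parenthesized word $\ul$ on $\Lambda$, the hom space $\Hom{[\mu]}{[\lambda_1]\otimes\cdots\otimes[\lambda_n]}$ is defined as the fiber of $\Nu(\mu;\ul)$ at a chosen ``caterpillar'' boundary point of $\Mrpo{n}{r}$ corresponding to that parenthesization. Composition and tensor product come from iterating the gluing isomorphism (G), the associator comes from parallel transport between boundary points of $\Mrpo{4}{r}$ corresponding to the two sides of the pentagon, and the braiding from monodromy of a half-twist in $\Mrpo{3}{r}$ interpreted via (Perm). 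Pentagon and hexagon axioms then reduce to the compatibility of repeated applications of (G), (N), and (Perm) demanded in \Cref{braidedfunctor}. Rigidity of $[\lambda]$ with dual $[\lambda^\dagger]$ is read from the (Dual) and (N) axioms; that the resulting weakly fusion category is genuinely fusion is the recent result of Etingof-Penneys \cite{etingofRigidityNonnegligibleObjects2024}. For $\mathrm{f}^0$ the construction is identical on $\Mgrb{0}{n}{r}$, with the twist $\theta_{[\lambda]}$ recovered as the monodromy along a loop in the $\mu_r$-gerbe fiber $\Sigma_i$ over a point, i.e.\ the geometric analog of a full Dehn twist at a puncture.

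Next I would construct a quasi-inverse using the Reshetikhin-Turaev construction, which from a braided (resp.\ ribbon) fusion category produces a system of flat bundles on the $\Mrpo{n}{r}$ (resp.\ $\Mgrb{0}{n}{r}$) with compatible gluing, forgetful and permutation isomorphisms. Checking that the two functors compose to the identity reduces to matching the generators of the mapping class/braid group action of each side, which is carried out in \cite[\S 5]{godfardHodgeStructuresConformal2024} building on \cite[Ch. 5--6]{bakalovLecturesTensorCategories2000}. This establishes that $\mathrm{f}^0$ and $\mathrm{f}^b$ are equivalences.

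The hardest part is the full modular case. There, the bundles live on the $\mu_s$-gerbes $\Mgrbt{g}{n}{r}{s}$, and $\mu_s$ acts by a fixed scalar $e^{2\pi i c/s}$ on all fibers, so the central charge $c$ is an isomorphism invariant that is not recorded by the underlying modular category; this forces us to split the groupoid of modular functors into pieces indexed by $c$ and explains why $\mathrm{f}_c$ can be only fully faithful, not essentially surjective, on $\Modcat$. Given a modular functor $\Nu$ of central charge $c$, the genus $0$ data $\Nu|_{g=0}$ already determines a modular category via the (already proved) equivalence $\mathrm{f}^0$, and the higher-genus flat bundles $\Nu_g(\ul)$ are forced by iterated application of the separating and non-separating gluing axioms (G-sep), (G-nonsep) applied to a pants decomposition, up to the central-charge twist living in the $\mu_s$-gerbe direction. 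This yields fully faithfulness of $\mathrm{f}_c$. Essential surjectivity up to a well chosen $c$ then follows from the Reshetikhin-Turaev construction, which produces for every modular fusion category a modular functor with central charge controlled by the Gauss sum $p^{\pm}(\Cat)$ of the category, as spelled out in \cite[5.7.10]{bakalovLecturesTensorCategories2000}. The main technical point to verify carefully is thus the gerbe bookkeeping: one must show that every gluing/permutation isomorphism constructed from the genus $0$ modular category extends uniquely, up to the overall $\mu_s$-scalar, to the higher-genus stacks.
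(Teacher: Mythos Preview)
Your plan matches the paper's approach closely: the paper does not give its own proof of this theorem but cites it from \cite[5.9]{godfardHodgeStructuresConformal2024} and \cite{bakalovLecturesTensorCategories2000}, and the sketch it provides afterwards is exactly the construction you outline---tensor product from the fusion rule, associator from parallel transport between boundary points, braiding from a half-twist read through the \textbf{(Perm)} isomorphism, quasi-inverse via Riemann--Hilbert, rigidity via Etingof--Penneys.

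One small indexing slip to fix: the associator for $(\lambda_1,\lambda_2,\lambda_3)$ is parallel transport between two boundary points of $\Mrpo{3}{r}$ (three inputs, one stacky output), not $\Mrpo{4}{r}$; the pentagon is what lives in $\Mrpo{4}{r}$. Likewise the braiding is read on $\Mrpo{2}{r}$ (the paper describes it via the $S_2$-quotient $\Mrpo{2}{r}/S_2\simeq\Br{2r}$), with the hexagon verified in $\Mrpo{3}{r}$. This is purely notational and does not affect the correctness of your plan.
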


To end this section, let us say a word about how the functor $\mathrm{f}^b$ and its quasi-inverse are defined. A more detailed account is given in
\cite[5.]{godfardHodgeStructuresConformal2024}. Let $\Nu$ be a braided functor on the finite set $\Lambda$.
Then we can define a tensor structure on $\VL$ as follows. The functor $\otimes:\VL^{\boxtimes 2}\ra\VL$ is define as
\begin{equation*}
  [\lambda]\otimes[\mu] = \bigoplus_\nu \Nu(\nu;\lambda,\mu)[\nu].
\end{equation*}
To define the associator, note that by parallel transport along the connection on $\Nu(\mu;\lambda_1,\lambda_2,\lambda_3)$,
we can identify the stalk of this bundle at $2$ of the boundary points $x$ and $y$ by the path in \Cref{path}. We then define
the associator $\Hom{\mu}{(\lambda_1\otimes\lambda_2)\otimes\lambda_3}\simeq\Hom{\mu}{\lambda_1\otimes(\lambda_2\otimes\lambda_3)}$
as
\begin{align*}
  \Hom{\mu}{(\lambda_1\otimes\lambda_2)\otimes\lambda_3} &= \bigoplus_\delta \Nu(\mu;\delta,\lambda_3)\otimes\Nu(\delta;\lambda_1,\lambda_2) \\
                                                         &\simeq \Nu(\mu;\lambda_1,\lambda_2,\lambda_3)_x \\
                                                         &\simeq \Nu(\mu;\lambda_1,\lambda_2,\lambda_3)_y \\
                                                         &\simeq \bigoplus_\delta \Nu(\mu;\lambda_1,\delta)\otimes\Nu(\delta;\lambda_2,\lambda_3) \\
                                                         &= \Hom{\mu}{\lambda_1\otimes(\lambda_2\otimes\lambda_3)}.
\end{align*}

\begin{figure}
  \def\svgwidth{0.4\linewidth}
  %% Creator: Inkscape 1.3 (0e150ed, 2023-07-21), www.inkscape.org
%% PDF/EPS/PS + LaTeX output extension by Johan Engelen, 2010
%% Accompanies image file 'path.pdf' (pdf, eps, ps)
%%
%% To include the image in your LaTeX document, write
%%   \input{<filename>.pdf_tex}
%%  instead of
%%   \includegraphics{<filename>.pdf}
%% To scale the image, write
%%   \def\svgwidth{<desired width>}
%%   \input{<filename>.pdf_tex}
%%  instead of
%%   \includegraphics[width=<desired width>]{<filename>.pdf}
%%
%% Images with a different path to the parent latex file can
%% be accessed with the `import' package (which may need to be
%% installed) using
%%   \usepackage{import}
%% in the preamble, and then including the image with
%%   \import{<path to file>}{<filename>.pdf_tex}
%% Alternatively, one can specify
%%   \graphicspath{{<path to file>/}}
%% 
%% For more information, please see info/svg-inkscape on CTAN:
%%   http://tug.ctan.org/tex-archive/info/svg-inkscape
%%
\begingroup%
  \makeatletter%
  \providecommand\color[2][]{%
    \errmessage{(Inkscape) Color is used for the text in Inkscape, but the package 'color.sty' is not loaded}%
    \renewcommand\color[2][]{}%
  }%
  \providecommand\transparent[1]{%
    \errmessage{(Inkscape) Transparency is used (non-zero) for the text in Inkscape, but the package 'transparent.sty' is not loaded}%
    \renewcommand\transparent[1]{}%
  }%
  \providecommand\rotatebox[2]{#2}%
  \newcommand*\fsize{\dimexpr\f@size pt\relax}%
  \newcommand*\lineheight[1]{\fontsize{\fsize}{#1\fsize}\selectfont}%
  \ifx\svgwidth\undefined%
    \setlength{\unitlength}{252.14098833bp}%
    \ifx\svgscale\undefined%
      \relax%
    \else%
      \setlength{\unitlength}{\unitlength * \real{\svgscale}}%
    \fi%
  \else%
    \setlength{\unitlength}{\svgwidth}%
  \fi%
  \global\let\svgwidth\undefined%
  \global\let\svgscale\undefined%
  \makeatother%
  \begin{picture}(1,0.8474042)%
    \lineheight{1}%
    \setlength\tabcolsep{0pt}%
    \put(0,0){\includegraphics[width=\unitlength,page=1]{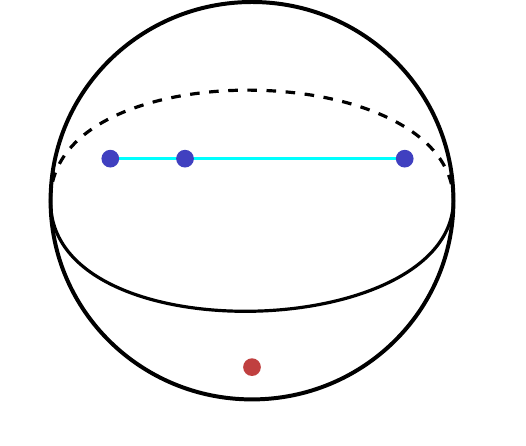}}%
    \put(0.4520528,0.01846593){\color[rgb]{0.75294118,0.25098039,0.25098039}\makebox(0,0)[lt]{\lineheight{1.25}\smash{\begin{tabular}[t]{l}$\mu$\end{tabular}}}}%
    \put(0.18553488,0.43568534){\color[rgb]{0.25098039,0.25098039,0.75294118}\makebox(0,0)[lt]{\lineheight{1.25}\smash{\begin{tabular}[t]{l}$\lambda_1$\end{tabular}}}}%
    \put(0.32632602,0.43982864){\color[rgb]{0.25098039,0.25098039,0.75294118}\makebox(0,0)[lt]{\lineheight{1.25}\smash{\begin{tabular}[t]{l}$\lambda_2$\end{tabular}}}}%
    \put(0.73960939,0.43986913){\color[rgb]{0.25098039,0.25098039,0.75294118}\makebox(0,0)[lt]{\lineheight{1.25}\smash{\begin{tabular}[t]{l}$\lambda_3$\end{tabular}}}}%
    \put(0,0){\includegraphics[width=\unitlength,page=2]{path.pdf}}%
  \end{picture}%
\endgroup%

  \caption{Path in $\Mgrpo{0}{3}{r}$ corresponding to moving the point marked $\lambda_2$ from the point marked $\lambda_1$ to the point marked $\lambda_3$.}
  \label[figure]{path}
\end{figure}

The braiding isomorphisms
\begin{equation*}
  \Hom{\mu}{\lambda_1\otimes\lambda_2}=\Nu(\mu;\lambda_1,\lambda_2)\simeq \Nu(\mu;\lambda_2,\lambda_1)=\Hom{\mu}{\lambda_2\otimes\lambda_1}
\end{equation*}
are then essentially given by the path braiding the $2$ marked points in $\Mrpo{2}{r}$
(see \cite[5.]{godfardHodgeStructuresConformal2024}).
To be more precise, as we use moduli of twisted curves, we can make sense of this as follows.
The stack $\Mrpo{2}{r}$ is isomorphic to $\Br{r}$ and has a natural $S_2=\Z/2\Z$ action given
by changing the order of the markings. The quotient can naturally be identified with $\Br{2r}$.
The bundle $\bigoplus_{\lambda_1,\lambda_2}\Nu(\mu;\lambda_1,\lambda_2)$ over $\Mrpo{2}{r}$
has a natural $S_2=\Z/2\Z$ action compatible with that on $\Mrpo{2}{r}$, given by the \textbf{(Perm)} axiom in \Cref{braidedfunctor}.
Hence $\bigoplus_{\lambda_1,\lambda_2}\Nu(\mu;\lambda_1,\lambda_2)$ descends to $\Mrpo{2}{r}/S_2=\Br{2r}$.
The action of the braiding is then given by the action of $e^{i\pi/r}\in\mu_{2r}$ on the fibers of
$\bigoplus_{\lambda_1,\lambda_2}\Nu(\mu;\lambda_1,\lambda_2)$.

We now say a few words about a quasi-inverse to $f^b$. Let $\Cat$ be a braided fusion category with underlying $\C$-linear category $\VL$.
For each choice of $\mu,\lambda_1,\dotsc,\lambda_n\in\Lambda$, the braiding and the associator of $\Cat$ yield
an action of the pure braid group $\PB_n$ on $\hom_\Cat([\mu],[\lambda_1]\otimes\dotsb\otimes[\lambda_n])$.
One can show (see \cite[2.17, 2.18]{godfardHodgeStructuresConformal2024}),
that this action factors through $\PB_n/\langle T_\delta^r\mid \delta\rangle=\pi_1\left(\Mrpo{n}{r}\right)$
for some $r$ depending only on $\Cat$.
The Riemann-Hilbert correspondence provides a bundle with flat connection $\Nu(\mu;\lambda_1,\dotsc,\lambda_n)$ whose monodromy
is given by the above pure braid group action on $\hom_\Cat([\mu],[\lambda_1]\otimes\dotsb\otimes[\lambda_n])$.
The gluing isomorphisms are then given by partial compositions in $\Cat$, as mentioned in \Cref{introquantumrep},
while forgetful isomorphisms are induced by the tensor unit $[0]$ of $\Cat$.

%-------------------------------------------------------------------------------------------------------------------------

\section{Semisimplification of local systems}\label{sectionSimpson}

%-------------------------------------------------------------------------------------------------------------------------

The purpose of this section is to recap Simpson's study of local systems on compact Kähler manifolds as
formal extensions of semi-simple local systems \cite[§3]{simpsonHiggsBundlesLocal1992},
and to deduce from it a canonical semisimplification of
local systems on smooth proper DM stacks. Below is an overview.

Any flat connection $\nabla$ on a smooth bundle $E$ can be described as an iterated extension of semisimple flat bundles.
From this one can get a decomposition $\nabla = D+\eta$ where $D$ is a \emph{semisimple} flat connection
and $\eta$ is a $1$-form with values in $\End{E}$ describing the iterated extension (we call such $\eta$ an extension).
However, this decomposition is not unique. The point of this section is to describe a canonical choice of decomposition
provided by non-Abelian Hodge theory when the base is compact Kähler.
This is essentially part of Simpson's paper. In short, the canonical decomposition $\nabla = D+\eta$ is characterized
by the fact that $\eta$ is killed by an operator $D'$ associated to $D$.
This canonical decomposition turns out to be functorial and compatible with natural operations
on flat bundles. We call this choice of $D$ \emph{the} semisimplification of $\nabla$.

We also describe a canonical deformation to the semisimplification, in the form of a polynomial family $(\eta_h)_{h\in\C}$
such that $\eta_1=\eta$, $\eta_0=0$ and $D+\eta_h$ is flat for all $h$. This family is provided by a
bijection between the extensions $\eta$ and extension cohomology classes, which gives the structure of a quadratic $\C$-cone on the set of extensions $\eta$.

Finally, using Simpson's formalism \cite{simpsonLocalSystemsProper2011}, we deduce the same results for smooth proper algebraic DM stacks.

The references for this section are \cite[§3]{simpsonHiggsBundlesLocal1992} for compact Kähler manifolds,
and \cite{simpsonLocalSystemsProper2011} for stacks.

%-------------------------------------------------------------------------------------------------------------------------

\subsection{Formality of the dg-category of semisimple local systems}

%-------------------------------------------------------------------------------------------------------------------------

In \cite{deligneRealHomotopyTheory1975}, Deligne, Griffiths, Morgan and Sullivan describe a $2$ term zigzag proving formality as a commutative dg-algebra
of the real de Rham complex of a compact Kähler manifold.
This zigzag involves looking at the kernel and the cohomology of an operator $d^c$ on the de Rham complex which commutes to the differential $d$
and satisfies a $dd^c$-Lemma (principle of two types).
Over $\C$, one may replace $d^c$ by the holomorphic differential operator $\partial$ in their argument.
In \cite[§3]{simpsonHiggsBundlesLocal1992}, Simpson extends this method to prove a formality result for de Rham complexes with coefficients
in \emph{semisimple} complex local system. More precisely, the formality is that of the dg-category of semisimple flat bundles.
This generalizes \cite{deligneRealHomotopyTheory1975} since the de Rham complex of the manifold is the endomorphism complex of the rank $1$ trivial bundle.
The operator $\partial$ on the complex of the trivial bundle generalizes to an operator $D'$ on the complex of any semisimple local system.
In the rest of this subsection, we recap the proof of this result.

Let $X$ be a compact Kähler manifold and $(E,D)$ a complex $\Cinf$-vector bundle with a flat connection whose monodromy is semisimple.
Then non-Abelian Hodge theory provides a unique decomposition $D=D'+D''$ of the connection into $2$ operators \cite[§1]{simpsonHiggsBundlesLocal1992}.
Here $D'$ is a flat $\partial$-connection
and $D''$ is a flat $\overline{\partial}$-connection, i.e. $D'(fs)=s\otimes \partial f+ fD's$ and $D''(fs)=s\otimes \overline{\partial} f+ fD''s$
for any section $s$ and function $f$. This decomposition is constructed through the choice of a harmonic metric on $(E,D)$,
see \cite[§1]{simpsonHiggsBundlesLocal1992} for details. Moreover, this decomposition is compatible with maps of semisimple flat bundles,
with taking tensor products and duals, and with pullbacks along complex maps $Y\ra X$ between compact Kähler manifolds.

The operators $D$, $D'$ and $D''$ are flat and thus each extends to a differential on the smooth de-Rham complex $\cA^\bullet(E)$ of $E$.
In fact, they satisfy Kähler identities \cite[§2]{simpsonHiggsBundlesLocal1992}, from which the following $DD'$-Lemma is deduced.

\begin{lemma}[$DD'$-Lemma {\cite[2.1]{simpsonHiggsBundlesLocal1992}}]\label{ddp}
  Let $\alpha\in\cA^k(E)$ be in the kernel of both $D$ and $D'$.
  Then if $\alpha$ is in the image of $D$ or $D'$, there exists $\beta\in \cA^{k-2}(E)$
  such that $DD'\beta=\alpha$.
\end{lemma}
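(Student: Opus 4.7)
The plan is to mimic the classical $\partial\overline{\partial}$-lemma proof, using as a black box the K\"ahler identities established in Simpson's \S 2. Those identities express $(D')^*$ and $(D'')^*$ as $\pm i[\Lambda, D'']$ and $\pm i[\Lambda, D']$, where $\Lambda$ is the formal adjoint of the Lefschetz operator associated to the K\"ahler form, and they imply that the three Laplacians $\Delta_D, \Delta_{D'}, \Delta_{D''}$ coincide up to a factor of $2$ on $\cA^\bullet(E)$. Consequently they share a harmonic projector $H$ and Green's operator $G$, and $D, D', D''$, their formal adjoints, and $\Lambda$ all commute with both $G$ and $H$. With this algebraic package in hand, the strategy is to produce $\beta$ by an explicit formula built from $G$ and $\Lambda$.

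The first step is a Hodge-theoretic reduction. From $D\alpha = D'\alpha = 0$ we get $D''\alpha = (D - D')\alpha = 0$, so all three operators annihilate $\alpha$. The hypothesis that $\alpha$ is $D$-exact or $D'$-exact places $\alpha$ in the $L^2$-orthogonal complement of the harmonic space $\mathcal{H}^k$, whence $H\alpha = 0$ and $\alpha = \Delta G \alpha$. Writing $\Delta = 2\Delta_{D'} = 2\bigl(D'(D')^* + (D')^* D'\bigr)$ and using that $G$ commutes with $D'$, the second summand becomes $2(D')^* G D'\alpha = 0$, leaving
\[
\alpha = 2\, D'(D')^* G \alpha.
\]

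The second step commutes $(D')^*$ past $G$ and invokes the K\"ahler identity. Taking $(D')^* = i[\Lambda, D'']$ and using $D''\alpha = 0$,
\[
(D')^* G \alpha = G (D')^* \alpha = i\, G [\Lambda, D''] \alpha = -i\, D'' G \Lambda \alpha.
\]
The identities $D'D'' = -D''D'$ (from $D^2 = 0$) and $D''D' = DD'$ (from $(D')^2 = 0$) then give
\[
\alpha = -2i\, D' D'' G \Lambda \alpha = 2i\, D D' \bigl(G \Lambda \alpha\bigr),
\]
so $\beta := 2i\, G\Lambda\alpha \in \cA^{k-2}(E)$ satisfies $DD'\beta = \alpha$. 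Both cases of the hypothesis are handled by the same computation: the only use of the exactness assumption is in deriving $H\alpha = 0$, which is insensitive to whether $\alpha$ is $D$- or $D'$-exact.

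The main obstacle is careful bookkeeping: one must cite the K\"ahler identities of \S 2 with the correct sign conventions and propagate factors of $i$ consistently to the final formula. Once the identities and the coincidence of Laplacians are in hand, the remainder is a direct calculation, and smoothness of $\beta$ is immediate from the standard regularity of $G$ acting on $\cA^\bullet(E)$ over the compact K\"ahler base.
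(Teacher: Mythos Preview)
Your argument is correct and is precisely the standard route to the $DD'$-lemma from the K\"ahler identities. Note, however, that the paper does not actually prove this lemma: it simply cites \cite[2.1]{simpsonHiggsBundlesLocal1992} and remarks that the statement is deduced from the K\"ahler identities of \cite[\S2]{simpsonHiggsBundlesLocal1992}. What you have written is exactly the argument one would supply to flesh out that citation, namely the classical Deligne--Griffiths--Morgan--Sullivan proof transported to Simpson's setting via the coincidence of Laplacians. So there is no discrepancy in approach; you have just made explicit what the paper leaves as a reference.

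One small bookkeeping point: your claim that $G$ commutes with $\Lambda$ is used in passing from $G(-iD''\Lambda\alpha)$ to $-iD''G\Lambda\alpha$, but in fact you only need that $G$ commutes with $D''$ there (which follows since $D''$ commutes with $\Delta$). The commutation of $G$ with $\Lambda$ is not required, which is fortunate since $\Lambda$ does not in general commute with $\Delta$. Your final formula $\beta = 2iG\Lambda\alpha$ is nonetheless correct as written.
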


Note that as $D''=D-D'$ is flat, $DD'=\pm D'D$ on $\cA^k(E)$.

\begin{example}
  The simplest case is when the connection $D$ preserves a unitary metric on $E$.
  Then, $D'$ and $D''$ are just respectively the $(1,0)$ and $(0,1)$ parts of
  $D:\cA^0(E)\ra \cA^1(E)=\cA^{1,0}(E)\oplus \cA^{0,1}(E)$.
  For example, if $(E,D)$ is the rank $1$ trivial bundle, $D=d$, $D'=\partial$ and $D''=\dol$.
  Then the $DD'$-Lemma above is essentially the usual $\partial\dol$-Lemma in Hodge theory.
\end{example}

Let us now explain the main ingredient in Simpson's study of (iterated) extensions of semisimple local systems.

\begin{lemma}[Formality {\cite[2.2]{simpsonHiggsBundlesLocal1992}}, see also {\cite{deligneRealHomotopyTheory1975}}]\label{zigzag}
  Denote by $\ZD^\bullet(E)\subset \cA^\bullet(E)$ the subset of $D'$-closed forms
  and by $\HD^\bullet(E)$ the cohomology of $(\cA^\bullet(E),D')$.
  Then the arrows in the diagram
  $$(\cA^\bullet(E),D)\longleftarrow (\ZD^\bullet(E),D)\lra (\HD^\bullet(E),0)$$
  are quasi-isomorphisms of complexes. These are compatible with taking tensor products, duals and pullbacks of flat bundles
  and with maps of flat bundles.
\end{lemma}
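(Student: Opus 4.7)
The plan is to imitate the classical two-term formality zigzag of Deligne--Griffiths--Morgan--Sullivan \cite{deligneRealHomotopyTheory1975}, with $(d,\partial)$ replaced by the commuting pair $(D,D')$ and the $\partial\overline{\partial}$-Lemma replaced by the $DD'$-Lemma (\Cref{ddp}). Throughout I would use the anticommutation $DD' = -D'D$, which follows from expanding $0 = D^2$ together with $(D')^2 = (D'')^2 = 0$.

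First, I would check that the two arrows are morphisms of complexes with differential $D$, where $\HD^\bullet(E)$ carries the zero differential. The anticommutation shows that $D$ preserves $\ZD^\bullet(E)$, so the inclusion on the left is a morphism of complexes. For the right arrow, given $\alpha \in \ZD^\bullet(E)$, the form $D\alpha$ is $D$-closed, $D'$-closed and manifestly $D$-exact, so the $DD'$-Lemma yields $\beta$ with $D\alpha = DD'\beta = -D'(D\beta)$; thus $[D\alpha] = 0$ in $\HD^\bullet(E)$, i.e.\ $D$ descends as the zero differential.

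Next I would verify that both arrows induce isomorphisms on $D$-cohomology, in each case by a direct application of the $DD'$-Lemma. For the inclusion, surjectivity on cohomology follows because, given a $D$-closed $\alpha \in \cA^\bullet(E)$, the form $D'\alpha$ is $D$-closed, $D'$-closed and $D'$-exact, hence equals $DD'\gamma$ for some $\gamma$; then $\alpha + D\gamma$ lies in $\ZD^\bullet(E)$ and is $D$-cohomologous to $\alpha$. Injectivity is symmetric: if $\alpha \in \ZD^\bullet(E)$ with $\alpha = Dy$, then $D'y$ is $D$-closed, $D'$-closed and $D'$-exact, whence $D'y = DD'z$ for some $z$, and $y + Dz \in \ZD^\bullet(E)$ witnesses $\alpha$ as $D$-exact inside the subcomplex. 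A strictly parallel argument handles the right arrow, using the $DD'$-Lemma to both promote a $D'$-cohomology class to a $D$-closed representative (by subtracting a suitable $D'$-boundary) and to rewrite any $D$-closed, $D'$-exact form as $D$ applied to a $D'$-closed form.

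Finally, compatibility with tensor products, duals, pullbacks and morphisms of flat bundles should be essentially automatic: the decomposition $D = D' + D''$ is already compatible with these operations via the uniqueness of the harmonic metric on a semisimple flat bundle recalled above, and since $\ZD^\bullet(E)$ and $\HD^\bullet(E)$ are defined purely in terms of $D'$, the arrows of the zigzag inherit the required naturality. I do not expect any substantive obstacle at this stage: all of the genuinely analytic content has already been absorbed into the Kähler identities and the $DD'$-Lemma, and what remains is bookkeeping, with the only real care needed being to track signs when invoking the lemma in each of its four incarnations above.
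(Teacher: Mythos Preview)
Your proposal is correct and follows essentially the same route as the paper's proof: both run the DGMS zigzag argument with $(D,D')$ in place of $(d,\partial)$, invoking the $DD'$-Lemma (\Cref{ddp}) once for well-definedness of the right arrow and once in each of the four surjectivity/injectivity checks, and then reading off the naturality from the compatibilities of the decomposition $D=D'+D''$. The only cosmetic difference is that for injectivity of the left arrow the paper applies the $DD'$-Lemma directly to the $D$-exact, $D'$-closed form $\alpha=Dy$ (obtaining $\alpha=DD'\delta$ and taking $\gamma=D'\delta$), whereas you first pass to $D'y$; these are equivalent.
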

\begin{proof}
  From the fact that $D$ and $D'$ commute, one sees that the arrows are well defined maps of complexes,
  provided that the differential induced by $D$ on $\HD^\bullet(E)$ is $0$, which we now check.
  Let $\alpha$ be a $D'$-closed $k$-form. We want to show that $D\alpha$ is $D'$-exact.
  To this end note that $D\alpha$ satisfies the $DD'$-Lemma and thus that $D\alpha=D'D\beta$ for some $(k-1)$-form $\beta$.
  Hence $D\alpha$ is $D'$-exact, and the induced differential is indeed $0$.

  Let us now show that the arrow on the left is a quasi-isomorphism.
  Let $\alpha$ be a $D$-closed $k$-form. Then $D'\alpha$ satisfies the $DD'$-Lemma,
  and thus $D'\alpha=D'D\beta$ for some $(k-1)$-form $\beta$. Then $\alpha-D\beta$ is $D'$-closed
  and has the same $D$-cohomology class as $\alpha$. This shows that the map is surjective on cohomology.
  For injectivity, assume that $D\beta$ is a $D'$-closed $k$-form. We want to show that $D\beta=D\gamma$
  for a $D'$-closed $\gamma$. Applying the $DD'$-Lemma to $D\beta$ yields $D\beta=DD'\delta$ for some $\delta$.
  Setting $\gamma=D'\delta$ concludes.

  Let us turn our attention to the arrow on the right.
  Let $\alpha$ be a $D'$-closed $k$-form. Applying the $DD'$-Lemma, $D\alpha=DD'\beta$ for some $\beta$.
  Then $\alpha-D'\beta$ is $D$-closed and has the same image as $\alpha$ in $\HD^k(E)$. This shows surjectivity on cohomologies.
  For injectivity, assume that $D'\alpha$ is a $D$-closed $k$-form. We want to show that $D'\alpha$ is also $D$-exact.
  But this is just the $DD'$-Lemma again.

  The compatibility of the arrows with respect to the operations mentioned is directly deduced from that of the decomposition
  $D=D'+D''$.
\end{proof}

\begin{remark}
  In particular, note that $(\cA^\bullet(E),D)$ and $(\cA^\bullet(E),D')$ have canonically identified cohomologies.
\end{remark}

We use Simpson's conventions for dg-categories \cite[§3]{simpsonHiggsBundlesLocal1992}:
a dg-category $\Cat$ is a category enriched in cochain complexes of $\C$-vector spaces concentrated in non-negative degrees.
For the notions of functors, natural transformations and quasi-equivalences in the context of dg-categories, see there.

\begin{notation}\label{notationzero}
  For $\Cat$ a dg-category, we will denote by $\Cat^0$ the associated $\C$-linear category.
  $\Cat^0$ has the same objects as $\Cat$ with morphisms $\Cat^0(x,y)=\mathrm{H}^0(\Cat(x,y))$.
\end{notation}

For each compact Kähler manifold $X$, the following dg-categories will be relevant to this paper
\cite[3.4.1-3.4.4]{simpsonHiggsBundlesLocal1992}.
\begin{itemize}
  \item The dg-category $\CdR$ whose objects are flat bundles on $X$ and
whose complex of morphisms from $(E,\nabla^E)$ to $(F,\nabla^F)$ is $\cA^\bullet(\Hom{E}{F})$
with the differential induced by the connection on $\Hom{E}{F}$. We will denote by $\CdRs$ the full sub-dg-category
of semisimple flat bundles.

  \item The dg-category $\CD$ whose objects are \emph{semisimple} flat bundles on $X$
and whose complex of morphisms from $(E,D^E)$ to $(F,D^F)$ is $\ZD^\bullet(\Hom{E}{F})$
with the differential $D$ induced by the connection on $\Hom{E}{F}$.

  \item The dg-category $\CH$ whose objects are \emph{semisimple} flat bundles on $X$
and whose complex of morphisms from $(E,D^E)$ to $(F,D^F)$ is $\HD^\bullet(\Hom{E}{F})$
with the $0$ differential.
\end{itemize}

As a direct corollary of \Cref{zigzag}, we have the following functorial formality result.

\begin{proposition}[{\cite[3.4 and p.43]{simpsonHiggsBundlesLocal1992}}]\label{quasiequiv}
  The maps of \Cref{zigzag} induce quasi-equivalences of dg-categories
  $$\CdRs\longleftarrow \CD \lra \CH.$$
  Moreover these quasi-equivalences are compatible with the natural symmetric tensor and rigid structures induced by
  taking tensor products and duals of flat bundles, and with respect to pullbacks along complex maps $Y\ra X$ between compact Kähler manifolds.
\end{proposition}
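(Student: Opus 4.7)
The plan is to reduce the proposition to \Cref{zigzag} applied to hom bundles, and then verify the compatibility assertions by transporting the corresponding compatibilities of the harmonic decomposition $D = D' + D''$.

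First I would check that the zigzag of dg-functors is well defined. The three dg-categories share the same objects, namely the semisimple flat bundles on $X$, and the zigzag is the identity on objects, so essential surjectivity on the associated $\C$-linear categories of \Cref{notationzero} is automatic. For the hom complexes, given two semisimple flat bundles $(E,D^E)$ and $(F,D^F)$, one needs that the internal hom $\Hom{E}{F}\simeq E^\vee\otimes F$ is again a semisimple flat bundle. This is the one non-formal input: by the non-Abelian Hodge correspondence on compact Kähler manifolds, semisimple flat bundles correspond to polystable Higgs bundles with vanishing rational Chern classes, and this class is preserved under tensor products and duals. Granted this, the hom complex $\cA^\bullet(\Hom{E}{F})$ with differential $D$ fits the hypotheses of \Cref{zigzag}, which yields the desired pair of quasi-isomorphisms on hom complexes.

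Next I would verify the symmetric monoidal and rigid compatibility. The tensor and dual structures on each of $\CdRs$, $\CD$, $\CH$ are inherited from those on smooth bundles via $(E,F)\mapsto E\otimes F$ and $E\mapsto E^\vee$. The harmonic decomposition is compatible with these: on $E\otimes F$ the operator is $D'^E\otimes 1+1\otimes D'^F$, and on $E^\vee$ it is the formal dual of $D'^E$. Consequently the subcomplex $\ZD^\bullet$ of $D'$-closed forms and the cohomology $\HD^\bullet$ are preserved under tensor product and duality, and the tautological maps of the zigzag are strict symmetric monoidal functors respecting rigid duals. For a holomorphic map $f:Y\to X$, pullbacks of polystable Higgs bundles with vanishing Chern classes remain so, hence pullbacks of semisimple flat bundles are semisimple, and $f^*D=f^*D'+f^*D''$ is the harmonic decomposition of $f^*D$. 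Thus $f^*$ sends $\ZD^\bullet$ and $\HD^\bullet$ into the corresponding spaces on $Y$ and commutes with the zigzag.

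The main obstacle is the single non-elementary ingredient: semisimplicity of $E^\vee\otimes F$ when $E,F$ are semisimple. All the rest of the proposition is bookkeeping, propagating the pointwise functoriality of the decomposition $D=D'+D''$ through the constructions of $\ZD^\bullet$ and $\HD^\bullet$. One should also be careful that the coherence data (associator, commutator, unit constraints) in each dg-category is inherited unchanged from the underlying category of $\Cinf$-bundles, so that compatibility of the zigzag with these natural transformations is immediate once the strict monoidality of the individual dg-functors is established.
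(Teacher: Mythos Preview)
Your proposal is correct and matches the paper's approach: the paper simply states the proposition as ``a direct corollary of \Cref{zigzag}'' with no further proof, and you have unpacked exactly that corollary---apply the zigzag to the hom bundles $\Hom{E}{F}$ and inherit the compatibilities from those of the decomposition $D=D'+D''$. Your explicit identification of the one non-formal input (that $E^\vee\otimes F$ is again semisimple, via the Higgs correspondence or equivalently reductivity of monodromy) is a point the paper leaves implicit in its assertion that the decomposition is compatible with tensor products and duals.
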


%-------------------------------------------------------------------------------------------------------------------------

\subsection{The dg-formalism for extensions}

%-------------------------------------------------------------------------------------------------------------------------

\begin{definition}[Extension {\cite[§3]{simpsonHiggsBundlesLocal1992}}]
  An extension in a dg-category $\Cat$ is a diagram
  $$x\xrightarrow{a} y\xrightarrow{b} z$$
  where $a$ and $b$ are closed degree $0$ maps, $ba=0$ and there exist degree $0$ maps $f:y\ra x$ and $g: z\ra y$
  such that $fg=0$, $fa=\id_x$, $bg=\id_z$ and $af+gb=\id_y$.

  The associated extension class is defined to be $[fd(g)]\in \mathrm{H}^1(\Cat(z,x))$. Here $d$ denotes the differential of $\Cat(z,y)$.
\end{definition}

With the notation of the definition, we will say that $y$ is an extension of $z$ by $x$.
The extension class is independent of the choice of $(f,g)$, and $2$ extension diagrams of $z$ by $x$
admit an isomorphism if and only if they have
the same extension class.

Note that in $\CdR$, the notions of extensions and extension classes coincide with the usual notions
of extensions and extension classes for flat bundles.

\begin{definition}[Extension completion {\cite[§3]{simpsonHiggsBundlesLocal1992}}]
  Let $\Cat$ be a dg-category, whose differentials we will denote by $d$.
  Let $\overline{\Cat}$ be the category whose
  \begin{itemize}
    \item objects are pairs $(x,\eta)$ where $x$ is an object of $\Cat$ and $\eta\in \Cat^1(x,x)$
    is a Maurer-Cartan, i.e. $d\eta +\eta^2=0$;
    \item complex of morphisms form $(x,\eta)$ to $(y,\xi)$ is $\Cat(x,y)$
    with the twisted differential $\hat{d}f=df+\xi f- (-1)^{\mathrm{deg} f} f\eta$.
  \end{itemize}
  Then the completion for extension $\hat{\Cat}$ is the full sub-dg-category of $\overline{\Cat}$
  consisting of objects which are iterated extensions of objects of the form $(x,0)$.
\end{definition}

Note that there is a natural embedding $\Cat\ra \hat{\Cat}$ induced by $x\mapsto (x,0)$,
and that by definition every object of $\hat{\Cat}$ is an iterated extension of objects in $\Cat$.
The following Lemma explains why $\hat{\Cat}$ is called the completion for extensions.

\begin{lemma}[{\cite[3.1]{simpsonHiggsBundlesLocal1992}}]\label{complete}
  Let $\Cat$ be a dg-category. If for every object $x$ and $z$, every class $\omega\in \mathrm{H}^1(\Cat(z,x))$
  is the class of some extension $x\ra y\ra z$, then the embedding $\Cat\ra \hat{\Cat}$ is a quasi-equivalence.
\end{lemma}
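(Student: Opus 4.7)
The first observation is that the embedding $\Cat \to \hat{\Cat}$ is automatically quasi-fully-faithful: for $x, y \in \Cat$, the twisted differential $\hat{d}$ on $\hat{\Cat}((x,0),(y,0))$ reduces to the original differential of $\Cat(x,y)$, since both Maurer-Cartan elements are zero. So the map on morphism complexes is an equality, a fortiori a quasi-isomorphism. The real content is essential surjectivity on $\mathrm{H}^0$: every object of $\hat{\Cat}$ must be isomorphic in $\mathrm{H}^0(\hat{\Cat})$ to an object in the image of $\Cat$.

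I would prove this by induction on the length of the iterated extension presenting an object $(y, \eta) \in \hat{\Cat}$ as built from objects $(x, 0)$ with $x \in \Cat$. The base case (length $\leq 1$) is immediate. For the inductive step, present $(y, \eta)$ as the middle term of an extension $a' \to (y, \eta) \to c'$ in $\hat{\Cat}$, where $a'$ and $c'$ have strictly shorter presentations. By the inductive hypothesis, both $a'$ and $c'$ are $\mathrm{H}^0$-isomorphic in $\hat{\Cat}$ to objects $(a, 0)$ and $(c, 0)$ coming from $\Cat$. Transporting the extension along these isomorphisms (push-forward along an $\mathrm{H}^0$-isomorphism of the kernel and pull-back along an $\mathrm{H}^0$-isomorphism of the cokernel), we produce a new extension with outer terms $(a, 0)$ and $(c, 0)$ whose middle term is $\mathrm{H}^0$-isomorphic to $(y, \eta)$.

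At this point, the extension class lives in $\mathrm{H}^1(\hat{\Cat}((c,0),(a,0))) = \mathrm{H}^1(\Cat(c, a))$, the equality coming from the fully faithfulness established above. The hypothesis of the lemma guarantees that this class is realized by some honest extension $a \to y'' \to c$ in $\Cat$. Its image in $\hat{\Cat}$ is an extension of $(c, 0)$ by $(a, 0)$ sharing the same extension class as $(y, \eta)$. A standard argument, comparing the splitting data $(f, g)$ of the two extensions and invoking the five-lemma in $\mathrm{H}^0$, produces a closed degree $0$ morphism $(y, \eta) \to (y'', 0)$ inducing an $\mathrm{H}^0$-isomorphism. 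This places $(y, \eta)$ in the essential image of $\Cat$ and completes the induction.

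\textbf{Main obstacle.} The delicate point is the inductive step: I must check that transporting an extension along $\mathrm{H}^0$-isomorphisms of its outer terms preserves the middle term up to $\mathrm{H}^0$-isomorphism, and that two extensions sharing an extension class are $\mathrm{H}^0$-isomorphic. Both statements are classical analogues of facts about short exact sequences and $\mathrm{Ext}^1$-classes in abelian categories, but they require some care in the dg-setting where the splittings $(f, g)$ are only retractions up to the homotopy witnessed by $\eta$. Once these two lemmas on dg-extensions are in place, everything else follows from the fully faithfulness and the stated hypothesis on realization of classes.
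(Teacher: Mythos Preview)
The paper does not prove this lemma; it simply cites Simpson \cite[3.1]{simpsonHiggsBundlesLocal1992}. So there is no proof in the paper to compare your proposal against.

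Your approach is the natural one and is correct. A couple of remarks that may streamline it. First, the paper already records, just before the definition of $\hat{\Cat}$, that two extension diagrams of $z$ by $x$ admit an isomorphism if and only if they have the same extension class; so your ``obstacle 2'' is available for free. Second, for ``obstacle 1'' (transport along $\mathrm{H}^0$-isomorphisms of the outer terms), you can avoid a separate argument by noting that a closed degree $0$ isomorphism $a' \to (a,0)$ in $\hat{\Cat}$ induces, via pre- and post-composition, a quasi-isomorphism on the relevant morphism complexes; pushing forward or pulling back the splitting data $(f,g)$ along such a map yields new splitting data whose associated extension class is the image of the old one under the induced isomorphism on $\mathrm{H}^1$. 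Combined with the fact stated in the paper, this gives the desired $\mathrm{H}^0$-isomorphism of middle terms directly. With these two points in hand, your induction goes through exactly as you describe.
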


A dg-category $\Cat$ satisfying the hypothesis of \Cref{complete} is called \emph{extension-complete}.

\begin{example}\label{completionCdRs}
  Fix $X$. The dg-category $\CdR$ is extension-complete. Indeed, if
  $$\omega\in \mathrm{H}^1\left(\CdR\left((F,\nabla^F),(E,\nabla^E)\right)\right)$$
  is a class, then any $\eta\in \omega$ represents a $\nabla$-closed element of $\cA^1(\Hom{F}{E})$ and thus induces an extension
  $$\left(E\oplus F,\begin{pmatrix} \nabla^E & \eta \\ 0 & \nabla^F \end{pmatrix}\right)$$
  of $(F,\nabla^F)$ by $(E,\nabla^E)$.

  However, its sub-dg-category $\CdRs$ of semisimple flat bundles is not complete. In fact, its completion for extensions
  $\hCdRs$ is quasi-equivalent to $\CdR$. Indeed, one can check directly that the functor
  $$\hCdRs\lra \CdR,\: \left((E,D),\eta\right)\mapsto (E,D+\eta)$$
  is a quasi-equivalence.
\end{example}

The last Lemma we will need from Simpson's paper is the following.

\begin{lemma}[{\cite[3.3]{simpsonHiggsBundlesLocal1992}}]
  A quasi-equivalence $F:\Cat_1\ra \Cat_2$ of dg-categories
  induces a quasi-equivalence of completions for extensions
  $$\hat{F}:\hat{\Cat}_1\ra \hat{\Cat}_2,\: (x,\eta)\mapsto (Fx,F\eta).$$
\end{lemma}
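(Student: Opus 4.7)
The plan is to verify in turn that $\hat{F}$ is a well-defined dg-functor, that it induces quasi-isomorphisms on all hom complexes, and that it is essentially surjective on the associated $\C$-linear categories $(\hat{\Cat}_i)^0$ of \Cref{notationzero}. That $\hat F$ is a dg-functor is formal: since $F$ preserves differentials, compositions and identities, it preserves the Maurer-Cartan equation $d\eta+\eta^2=0$, so $(Fx,F\eta)$ is a valid object whenever $(x,\eta)$ is, and it preserves the twisted differentials $\hat d f=df+\xi f-(-1)^{\deg f}f\eta$ verbatim. An iterated extension in $\hat{\Cat}_1$ is sent to an iterated extension in $\hat{\Cat}_2$ because the defining identities $fa=\id_x$, $bg=\id_z$, $af+gb=\id_y$ and $ba=0$ are preserved, and the constituent maps remain closed and of degree $0$.

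For the quasi-isomorphism on hom complexes, I would induct on the length of the iterated extensions presenting the source and target objects. In the base case, both objects have the form $(x,0)$ and $(y,0)$, so their hom complex in $\hat{\Cat}_1$ is just $\Cat_1(x,y)$, which maps quasi-isomorphically to $\Cat_2(Fx,Fy)$ by the hypothesis that $F$ is a quasi-equivalence. For the inductive step, if $(y,\xi)$ fits into an extension $(x_1,\eta_1)\to(y,\xi)\to(x_2,\eta_2)$, then for any object $w$ the underlying graded modules of the twisted hom complexes split as a direct sum, yielding a short exact sequence of complexes
\begin{equation*}
  0\to\hat{\Cat}_1\bigl(w,(x_1,\eta_1)\bigr)\to\hat{\Cat}_1\bigl(w,(y,\xi)\bigr)\to\hat{\Cat}_1\bigl(w,(x_2,\eta_2)\bigr)\to 0.
\end{equation*}
Applying $\hat F$ gives a morphism of such short exact sequences to the analogous one in $\hat{\Cat}_2$; the inductive hypothesis provides quasi-isomorphisms on the outer terms, and the five lemma applied to the long exact sequences in cohomology gives the result on the middle term. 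A symmetric argument handles extensions in the source variable.

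For essential surjectivity of $\hat F$ on $H^0$, I again induct on the depth of iterated extensions. Objects of the form $(y,0)$ in $\hat{\Cat}_2$ are handled by essential surjectivity of $F$. Given an extension $\alpha\to\beta\to\gamma$ in $\hat{\Cat}_2$, lift $\alpha$ and $\gamma$ to $\hat\alpha,\hat\gamma\in\hat{\Cat}_1$ by induction; the quasi-isomorphism from the previous step identifies $\mathrm{H}^1\hat{\Cat}_1(\hat\gamma,\hat\alpha)\simeq\mathrm{H}^1\hat{\Cat}_2(\gamma,\alpha)$, so the extension class of $\beta$ pulls back to an extension class in $\hat{\Cat}_1$, and any representative Maurer-Cartan perturbation $\eta$ produces an object $\hat\beta\in\hat{\Cat}_1$ with $\hat F\hat\beta\simeq\beta$ in $(\hat{\Cat}_2)^0$. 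The main obstacle is the bookkeeping to ensure that perturbations of Maurer-Cartan elements glue correctly along the iterated extension filtration and that closed lifts of maps remain closed up to coboundary; the key technical input making all this work is precisely that quasi-equivalences induce isomorphisms on the $\mathrm{H}^1$ groups controlling extension classes, so that every extension downstairs admits a lift upstairs.
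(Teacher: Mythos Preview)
The paper does not give its own proof of this lemma; it is quoted directly from Simpson \cite[3.3]{simpsonHiggsBundlesLocal1992}. Your argument is the standard one and is essentially correct: check $\hat F$ is a dg-functor, induct on extension depth using short exact sequences of twisted hom complexes and the five lemma for full faithfulness, and lift extension classes through the resulting $\mathrm{H}^1$-isomorphisms for essential surjectivity.

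The one point you label ``bookkeeping'' deserves to be made explicit. In the essential surjectivity step, once you have lifted the extension class of $\beta$ to a class in $\mathrm{H}^1\hat{\Cat}_1(\hat\gamma,\hat\alpha)$ and chosen a representative cocycle $\omega_0$, you still need an object of $\Cat_1$ on which to place the Maurer--Cartan element $\bigl(\begin{smallmatrix}\eta_\alpha&\omega_0\\0&\eta_\gamma\end{smallmatrix}\bigr)$. That object is the direct sum of the underlying objects of $\hat\alpha$ and $\hat\gamma$, so the argument implicitly uses that $\Cat_1$ is additive. This is harmless---Simpson's dg-categories, and all those appearing in the present paper, have finite direct sums---but it is a genuine hypothesis rather than bookkeeping, and without it $\hat{\Cat}_1$ need not contain an extension realizing a given class.
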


Applying this Lemma to \Cref{quasiequiv}, we get the following.

\begin{theorem}[{\cite[3.4]{simpsonHiggsBundlesLocal1992}}]\label{equivalences}
  Let $X$ be a compact Kähler manifold. Then there are quasi-equivalences of dg-categories
  \[\begin{array}{ccccc}
    \CdR        & \longleftarrow & \hCD                    & \lra        & \hCH \\
    (E,D+\eta)  & \longmapsfrom  & \left((E,D),\eta\right) & \longmapsto & \left((E,D),[\eta]\right).
  \end{array}\]
  Here $[\eta]$ denotes the $D'$-cohomology class of $\eta$. Moreover, as in \Cref{quasiequiv},
  these equivalences are compatible with the symmetric tensor and rigid structures of the dg-categories, and with respect
  to pullbacks along complex maps $Y\ra X$ between compact Kähler manifold.
\end{theorem}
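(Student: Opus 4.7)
The plan is to assemble the statement from the three previous ingredients: the zigzag of quasi-equivalences in \Cref{quasiequiv}, the extension-completion functor, and the identification of $\hCdRs$ with $\CdR$ from \Cref{completionCdRs}. First, I would apply the Lemma quoted just before the theorem (quasi-equivalences of dg-categories induce quasi-equivalences on extension-completions) to the zigzag
\[\CdRs \longleftarrow \CD \lra \CH\]
of \Cref{quasiequiv}. This immediately yields a zigzag
\[\hCdRs \longleftarrow \hCD \lra \hCH\]
of quasi-equivalences of dg-categories, with the explicit formulas $((E,D),\eta)\mapsto ((E,D),\eta)$ on the left (via the inclusion $\CD\hookrightarrow\CdRs$ composed with the completion) and $((E,D),\eta)\mapsto ((E,D),[\eta])$ on the right.

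Next I would invoke \Cref{completionCdRs}, which already supplies a quasi-equivalence $\hCdRs \lra \CdR$ sending $((E,D),\eta)$ to $(E,D+\eta)$. Composing with the left arrow above replaces $\hCdRs$ by $\CdR$ and produces exactly the described functor $\hCD \lra \CdR$, $((E,D),\eta)\mapsto (E,D+\eta)$. One should verify that the Maurer-Cartan condition $D\eta+\eta^2=0$ in $\hCD$ corresponds precisely to flatness of $D+\eta$, which is a direct computation: expanding $(D+\eta)^2$ and using $D^2=0$ gives $D\eta+\eta^2$.

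For the compatibilities in the second part of the statement, I would argue that all the structure is inherited. The decomposition $D=D'+D''$ from non-Abelian Hodge theory is functorial and compatible with tensor products, duals and complex pullbacks, as already recorded before \Cref{ddp}. Consequently the zigzag of \Cref{quasiequiv} respects these structures. The completion construction $\Cat\mapsto\hat{\Cat}$ is manifestly functorial in dg-categories and preserves symmetric monoidal and rigid structures because the Maurer-Cartan equation and the twisted differential $\hat{d}$ are built from the ambient dg-structure in a natural way; similarly the identification $\hCdRs\simeq\CdR$ of \Cref{completionCdRs} is given by a formula, $((E,D),\eta)\mapsto(E,D+\eta)$, that commutes with tensor products, duals, and pullbacks on the nose.

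The only genuinely delicate point is the already-established quasi-equivalence of \Cref{quasiequiv}, which rests on the $DD'$-Lemma (\ref{ddp}) and the analysis of harmonic bundles; everything in the present theorem is a formal consequence of that input together with the extension-completion formalism. Hence I expect no new obstacle beyond bookkeeping: checking that the explicit formulas for the two functors match the ones coming from the composition of the abstract equivalences, and that the natural transformations witnessing compatibility with $\otimes$, duals and $f^*$ descend through all the constructions. This bookkeeping is routine given that each of the three building blocks is individually compatible with these operations.
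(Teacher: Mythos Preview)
Your proposal is correct and follows exactly the approach the paper indicates: the paper's entire argument is the sentence ``Applying this Lemma to \Cref{quasiequiv}, we get the following,'' which is precisely your step of passing the zigzag $\CdRs\leftarrow\CD\rightarrow\CH$ through extension-completion and then invoking the identification $\hCdRs\simeq\CdR$ of \Cref{completionCdRs}. Your write-up simply unpacks this one-line justification in more detail.
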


%-------------------------------------------------------------------------------------------------------------------------

\subsection{Canonical semisimplification of local systems}

%-------------------------------------------------------------------------------------------------------------------------

In this subsection, we explain how Simpson's Theorem (\ref{equivalences}) provides a canonical semisimplification
of flat bundles on compact Kähler manifolds, and a canonical deformation to the semisimplification.
We also show that this deformation is polynomial and hence continuous. We fix $X$ a compact Kähler manifold.
All constructions below will be functorial with respect to complex maps $Y\ra X$ between compact Kähler manifolds.

Let us first give an explicit description of morphisms in the categories associated to $\hCD$ and $\hCH$
by taking $\mathrm{H}^0$ on morphism complexes (see \Cref{notationzero}).

\begin{proposition}\label{morphisms}
  The category $\hCDz$ associated to $\hCD$ is the category whose
  \begin{itemize}
    \item objects are pairs $\left((E,D),\eta\right)$ with $(E,D)$ a semisimple flat bundle and $\eta\in\cA^1(\End{E})$,
    such that $\eta$ is an iterated extension, $D\eta+\eta^2=0$ and $D'\eta=0$;
    \item morphisms from $\left((E,D^E),\eta\right)$ to $\left((F,D^F),\xi\right)$ are morphisms of flat bundles
    $f:(E,D^E)\ra (F,D^F)$ such that $f\eta=\xi f$.
  \end{itemize}
  Similarly, the category $\hCHz$ associated to $\hCH$ is the category whose
  \begin{itemize}
    \item objects are pairs $\left((E,D),u\right)$ with $(E,D)$ a semisimple flat bundle and $u\in\HD^1(\End{E})$,
    such that $u$ is an iterated extension and $u^2=0$ in $\HD^2(\End{E})$;
    \item morphisms from $\left((E,D^E),u\right)$ to $\left((F,D^F),v\right)$ are morphisms of flat bundles
    $f:(E,D^E)\ra (F,D^F)$ such that $fu=vf$ in $\HD^1(\Hom{E}{F})$.
  \end{itemize}
\end{proposition}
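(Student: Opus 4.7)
The proof will be an unpacking of the definitions of $\hCD$ and $\hCH$ as extension-completions, with one key input from the $DD'$-Lemma (\Cref{ddp}). I would first handle the objects. By construction, an object of $\hCD$ is a pair $((E,D),\eta)$ with $(E,D)$ a semisimple flat bundle and $\eta \in \CD^1((E,D),(E,D)) = \ZD^1(\End{E})$ a Maurer-Cartan element. Unpacking, this gives $\eta \in \cA^1(\End{E})$ with $D'\eta = 0$ (belonging to $\ZD^1$) and $D\eta + \eta^2 = 0$ (Maurer-Cartan), together with the condition that $((E,D),\eta)$ is an iterated extension of objects $(x_i,0)$. For $\hCH$, the internal differential on $\HD^\bullet$ is zero, so the Maurer-Cartan condition on $u \in \HD^1(\End{E})$ reduces to $u^2 = 0$ in $\HD^2(\End{E})$. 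This matches both object descriptions in the proposition.

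For morphisms in $\hCDz$, a morphism from $((E,D^E),\eta)$ to $((F,D^F),\xi)$ is by definition an element of $\mathrm{H}^0$ of the twisted morphism complex $(\ZD^\bullet(\Hom{E}{F}), \hat{d})$, which in degree zero is just the kernel $\ker(\hat{d}:\ZD^0 \to \ZD^1)$. Writing this out, a morphism is a section $f \in \cA^0(\Hom{E}{F})$ satisfying $D'f = 0$ together with $Df + \xi f - f\eta = 0$. The crucial step is to observe that on zero-forms, $D'f = 0$ is equivalent to $Df = 0$. Indeed, if $D'f = 0$ then $Df \in \cA^1(\Hom{E}{F})$ is both $D$- and $D'$-closed (the latter using $D'D = -DD'$) and lies in the image of $D$; the $DD'$-Lemma then forces $Df = DD'g$ for some $g \in \cA^{-1}(\Hom{E}{F}) = 0$, so $Df = 0$. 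The reverse implication is the symmetric argument applied to $D'f$. Using this equivalence, the two conditions $D'f = 0$ and $Df + \xi f - f\eta = 0$ collapse to $Df = 0$ together with $\xi f = f\eta$, matching the description in the proposition.

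The analysis for $\hCHz$ is parallel. Since the internal differential on $\HD^\bullet$ is zero, the twisted morphism complex $(\HD^\bullet(\Hom{E}{F}), \hat{d})$ has $\hat{d}w = v w - (-1)^{|w|} w u$, and its $\mathrm{H}^0$ is $\{w \in \HD^0(\Hom{E}{F}) : vw = wu \text{ in } \HD^1(\Hom{E}{F})\}$. By the same observation as above, $\HD^0(\Hom{E}{F}) = \ker(D' : \cA^0 \to \cA^1)$ coincides with the space of flat sections of $\Hom{E}{F}$, i.e.\ with morphisms of flat bundles from $(E,D^E)$ to $(F,D^F)$. This reproduces the description of morphisms in the proposition.

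I do not anticipate any substantial obstacle: the proof is a matter of writing out the definitions and invoking the $DD'$-Lemma once. The only points requiring care are the sign conventions in the twisted differential $\hat{d}$, and noting that the ``iterated extension'' restriction on objects passes through the description automatically since $\hCD \subset \overline{\CD}$ and $\hCH \subset \overline{\CH}$ are full sub-dg-categories.
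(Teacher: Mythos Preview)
Your proposal is correct and follows essentially the same approach as the paper. The only cosmetic difference is that the paper invokes \Cref{zigzag} to conclude directly that $\mathrm{H}^0(\cA^\bullet(\Hom{E}{F}),D)=\mathrm{H}^0(\cA^\bullet(\Hom{E}{F}),D')$, whereas you rederive this special case by applying the $DD'$-Lemma directly; since \Cref{zigzag} is itself proved via the $DD'$-Lemma, the arguments are the same in substance.
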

\begin{proof}
  The description of objects follow from the definitions.
  Morphisms from $\left((E,D^E),\eta\right)$ to $\left((F,D^F),\xi\right)$ in $\hCDz$
  are elements of $\mathrm{H}^0(\ZD^\bullet(\Hom{E}{F}),\hat{d})$, where the differential
  $\hat{d}$ on function is given by $\hat{d}f=Df+\xi f-f\eta$.
  Now by \Cref{zigzag}
  $$\mathrm{H}^0(\cA^\bullet(\Hom{E}{F}),D)=\mathrm{H}^0(\cA^\bullet(\Hom{E}{F}),D').$$
  Hence any $D'$-closed $f$ in $\cA^\bullet(\Hom{E}{F})$ is automatically $D$-closed,
  and thus a morphism of the underlying flat bundles.
  This in particular applies to elements of $\mathrm{H}^0(\ZD^\bullet(\Hom{E}{F}),\hat{d})$,
  and explains the above description of morphisms in $\hCD$.
  The same proof applies to morphisms in $\hCH$.
\end{proof}

From \Cref{equivalences} and \Cref{morphisms}, we deduce the following isomorphisms of categories.

\begin{corollary}
  Let $X$ be a compact Kähler manifold. Then there are \emph{isomorphisms of categories}
  \[\begin{array}{ccccc}
    \CdRz        & \longleftarrow & \hCDz                    & \lra        & \hCHz \\
    (E,D+\eta)   & \longmapsfrom  & \left((E,D),\eta\right)  & \longmapsto & \left((E,D),[\eta]\right).
  \end{array}\]
  Here $[\eta]$ denotes the $D'$-cohomology class of $\eta$. As in \Cref{equivalences},
  these equivalences are compatible with the symmetric tensor and rigid structures of the dg-categories, and with respect
  to pullbacks along complex maps $Y\ra X$ between compact Kähler manifolds.
\end{corollary}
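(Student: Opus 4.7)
The plan is to derive the corollary directly from Theorem~\ref{equivalences} combined with the explicit descriptions of the morphism sets given by Proposition~\ref{morphisms}.

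Passing to $\mathrm{H}^0$ of the dg-quasi-equivalences of Theorem~\ref{equivalences} first yields functors $\CdRz \longleftarrow \hCDz \longrightarrow \hCHz$ of ordinary $\C$-linear categories which are fully faithful and essentially surjective. Using Proposition~\ref{morphisms}, these functors act on morphisms by sending a flat map $f$ (intertwining the extension data $\eta$ and $\xi$) to itself, respectively to its $D'$-cohomology class $[f]$, and the object assignments are as displayed in the statement. Since a dg-quasi-equivalence induces isomorphisms on $\mathrm{H}^0$ of every morphism complex, and by Proposition~\ref{morphisms} these $\mathrm{H}^0$'s are precisely the morphism sets in the ordinary categories, the induced functors are automatically bijections on morphism sets.

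It remains to promote essential surjectivity to a strict bijection on objects. For $\hCDz \longrightarrow \CdRz$, I would show that each flat bundle $(E,\nabla)$ admits a unique decomposition $\nabla = D + \eta$ with $(E,D)$ a polystable semisimple flat bundle on the same underlying $\Cinf$ bundle $E$ and with $\eta \in \cA^1(\End{E})$ a $D'$-closed Maurer--Cartan iterated extension. Existence comes from Simpson's canonical semisimplification: picking the harmonic metric on a polystable representative determines $(E,D)$ on the same $\Cinf$ bundle via a $\Cinf$ splitting of a Jordan--Hölder filtration, and the Kähler identities applied to this harmonic metric force $\eta = \nabla - D$ to be $D'$-closed. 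Uniqueness follows from uniqueness of Simpson's harmonic metric, up to scalars on simple summands, which pins down the semisimple part $D$ as a function of $\nabla$ alone. The analogous bijection on objects for $\hCDz \longrightarrow \hCHz$ is then obtained by composition with the isomorphism $\hCDz \simeq \CdRz$ just established, which produces a canonical $D'$-closed Maurer--Cartan lift of every cohomology class $u$.

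The key technical point is verifying that Simpson's harmonic theory provides a genuinely canonical decomposition $\nabla = D + \eta$ on the fixed $\Cinf$ bundle $E$, rather than merely up to isomorphism of flat bundles; this is really where one uses that $X$ is compact Kähler, via the existence and uniqueness of the harmonic metric. Compatibility of the resulting isomorphisms with tensor products, duals, and pullbacks is inherited directly from the analogous statement in Theorem~\ref{equivalences}.
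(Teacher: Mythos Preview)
Your approach diverges from the paper's in a significant way, and there is a genuine gap.

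The paper's proof is purely formal: having already established that the two functors are equivalences of categories (since $\mathrm{H}^0$ of a dg-quasi-equivalence is an equivalence), it upgrades them to bijections on objects by a simple transport trick. For surjectivity of $\hCDz\to\CdRz$, given $(E,\nabla)$ one takes any object $((F,D),\eta)$ isomorphic to it via some $f$, and then $((E,f^*D),f^*\eta)$ lies over $(E,\nabla)$ on the nose. For injectivity, if $((E,D^E),\eta)$ and $((F,D^F),\xi)$ map to the same flat bundle, then $E=F$ and $D^E+\eta=D^F+\xi$; so $\id_E$ is an isomorphism in $\CdRz$, and by full faithfulness it lifts to an isomorphism in $\hCDz$, which by the description in Proposition~\ref{morphisms} forces $D^E=D^F$ and $\eta=\xi$. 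The second functor is handled the same way. No harmonic metric is invoked at this point; indeed, the canonical semisimplification (Corollary~\ref{semisimplification}) is \emph{deduced} from the present corollary, not used to prove it.

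Your proposal instead tries to build the decomposition $\nabla=D+\eta$ directly from the harmonic metric, and this is where the gap lies. A $\Cinf$ splitting of a Jordan--H\"older filtration gives $\nabla=D+\eta$ with $D$ semisimple and $\eta$ an iterated extension satisfying $D\eta+\eta^2=0$, but for a generic splitting $\eta$ has no reason to be $D'$-closed; the K\"ahler identities and the harmonic metric on $(E,D)$ say nothing about $\eta$ itself. Obtaining a $D'$-closed representative requires adjusting the splitting via the formality zigzag of Lemma~\ref{zigzag}, which is precisely the content packaged into the equivalences of Theorem~\ref{equivalences}. (The paper's later Section~3.5 sketches what such a direct argument actually entails.) Your uniqueness argument has the same issue: uniqueness of the harmonic metric on the semisimple $D$ does not by itself pin down $D$ as a summand of $\nabla$. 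Finally, your treatment of the second functor ``by composition with $\hCDz\simeq\CdRz$'' is not a valid argument, since there is no functor $\CdRz\to\hCHz$ in play to compose with; the paper again argues directly, using full faithfulness and the description of morphisms in $\hCHz$.
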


\begin{proof}
  Quasi-equivalences of dg-categories induce equivalence of categories when taking $\mathrm{H}^0$ on morphism complexes.
  Hence the both functors in the diagram are equivalences. It remains to check that they induce bijections on objects.
  Let $(E,\nabla)$ be an object in $\CdRz$. It is isomorphic to some $(F,D+\eta)$ for $\left((F,D),\eta\right)$
  an object of $\hCDz$. Let $f:(E,\nabla)\ra (F,D+\eta)$ be an isomorphism.
  Then $\left((E,f^*D),f^*\eta\right)$ is an object of $\hCDz$ mapping to $(E,\nabla)$. This shows surjectivity of the first functor.
  Let $\left((E,D^E),\eta\right)$ and $\left((F,D^F),\xi\right)$ be two objects of $\hCDz$ mapping to the same object in
  $\CdRz$. Then $E=F$ and $D^E+\eta=D^F+\xi$. In particular, $\id_E:(E,D^E+\eta)\ra (E,D^F+\xi)$ is an isomorphism in $\CdRz$.
  So, by full-faithfulness, $\id_E:\left((E,D^E),\eta\right)\ra \left((E,D^F),\xi\right)$ is an isomorphism in $\hCDz$.
  By the description of morphisms in $\hCDz$ provided by \Cref{morphisms}, $D^E=D^F$ and $\eta=\xi$. This proves
  injectivity of the first functor.

  Let $\left((E,D),\omega\right)$ be an object in $\hCHz$. There exist an object $\left((F,D^F),\eta\right)$ in $\hCDz$
  and an isomorphism $f:(E,D)\ra (F,D^F)$ such that $f^*[\eta]=\omega$. Then $\left((E,D),f^*\eta\right)$
  is an object of $\hCDz$ mapping to $\left((E,D),\omega\right)$ in $\hCHz$. This shows surjectivity of the second functor.
  As for injectivity, assume that $\left((E,D^E),\eta\right)$ and $\left((F,D^F),\xi\right)$ are two objects
  of $\hCDz$ mapping to the same object in $\hCHz$. Then by the description of morphisms in
  $\hCHz$ provided by \Cref{morphisms}, $E=F$ and $D^E=D^F$. As above, $\id_E:\left((E,D^E),[\eta]\right)\ra \left((E,D^E),[\xi]\right)$
  is then an isomorphism, as $[\eta]=[\xi]$, which must lift to an isomorphism in $\hCDz$ by full-faithfulness.
  Hence $\eta=\xi$. This shows injectivity of the second functor.
\end{proof}

From these isomorphisms of categories and the description of morphisms in $\hCDz$,
we immediately get a canonical semisimplification for bundles with flat connections
on compact Kähler manifolds.

\begin{corollary}\label{semisimplification}
  Any flat connection $\nabla$ on a $\Cinf$ bundle $E$ on a compact Kähler manifold $X$
  has a unique decomposition $\nabla=D+\eta$ with $D$ a flat semisimple connection on $E$
  and $\eta\in\cA^1(\End{E})$ such that $\eta$ is an iterated extension and $D'\eta=0$.
  This decomposition is compatible with taking tensor products, duals and pullbacks,
  and the mapping $(E,\nabla)\mapsto (E,D)$ defines a functor.
  We will call $(E,D)$ \emph{the} semisimplification of $(E,\nabla)$.
\end{corollary}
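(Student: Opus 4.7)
The plan is to deduce this corollary directly from the isomorphism of categories $\hCDz \xrightarrow{\sim} \CdRz$ established in the previous corollary, by simply unwinding what it says on objects and morphisms.

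For existence and uniqueness of the decomposition, I would use the fact that the functor $\hCDz \to \CdRz$ sending $\left((E,D),\eta\right)$ to $(E, D+\eta)$ is an isomorphism of categories, hence in particular a bijection on objects. So every object $(E,\nabla) \in \CdRz$ is the image of a unique object $\left((E,D),\eta\right) \in \hCDz$. Unpacking the definition of $\hCDz$ (as recalled in \Cref{morphisms}), this means there is a unique pair $(D,\eta)$ with $D$ a flat semisimple connection on $E$, $\eta \in \cA^1(\End{E})$ an iterated extension, $D'\eta = 0$, and the Maurer--Cartan condition $D\eta + \eta^2 = 0$. The Maurer--Cartan condition is equivalent to flatness of $\nabla = D + \eta$ given that $D$ is flat, so it need not appear in the statement.

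For functoriality, any morphism $(E,\nabla^E) \to (F,\nabla^F)$ in $\CdRz$ corresponds via the isomorphism to a morphism $\left((E,D^E),\eta\right) \to \left((F,D^F),\xi\right)$ in $\hCDz$, and by the description of morphisms in \Cref{morphisms} this is in particular a morphism of the underlying flat bundles $(E,D^E) \to (F,D^F)$. Hence $(E,\nabla) \mapsto (E,D)$ extends to a functor $\CdRz \to \CdRz$ whose image lies in the full subcategory of semisimple flat bundles. Compatibility of this functor with tensor products, duals and pullbacks along complex maps $Y \to X$ between compact Kähler manifolds is inherited directly from the analogous compatibility of the isomorphism $\hCDz \simeq \CdRz$, which was part of its statement.

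I do not anticipate any real obstacle in this proof since it is essentially a repackaging of the prior corollary: the content sits in \Cref{equivalences} (formality of semisimple local systems, hence Simpson's theorem) and in the bijectivity on objects already extracted there. The only mildly delicate point is confirming that the bijection on objects preserves the underlying $\Cinf$ bundle $E$ rather than replacing it with an isomorphic one; this is immediate from the formula $\left((E,D),\eta\right) \mapsto (E, D + \eta)$, which keeps $E$ unchanged.
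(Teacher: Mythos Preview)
Your proposal is correct and matches the paper's approach exactly: the paper simply states that the corollary follows ``immediately'' from the isomorphisms of categories in the preceding corollary together with the description of morphisms in \Cref{morphisms}, and your unpacking of this---bijectivity on objects for existence/uniqueness, the morphism description for functoriality, and inherited compatibilities---is precisely what is intended.
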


Moreover, using the isomorphism of categories $\hCDz\simeq \hCHz$, we can describe a canonical
deformation to the semisimplification.

\begin{corollary}\label{deformationtosemisimplification}
  In the same context as in \Cref{semisimplification}, for each $h\in\C$
  there exists a unique $\eta_h$ such that $[\eta_h]=h[\eta]$ in $\HD^1(\End{E})$
  and $\left(E,D,\eta_h\right)$ is an object of $\hCDz$.
  For each $h$, the mapping $(E,\nabla)\mapsto (E,\nabla_h=D+\eta_h)$
  is a functor, compatible with tensor products, duals and pullbacks.

  Moreover, if $d$ is the minimal number of extensions necessary to construct $(E,\nabla)$ from semisimple flat bundles,
  the map $h\mapsto \nabla_h=D+\eta_h$ is polynomial of degree $\leq d$.
\end{corollary}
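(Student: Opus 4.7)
The plan is to invoke the isomorphism of categories $\hCDz \simeq \hCHz$ of \Cref{semisimplification} and transport the scaling action $u \mapsto hu$ on $\HD^1$-cohomology classes back to $\hCDz$ to produce $\eta_h$. Given $(E,D,\eta) \in \hCDz$ associated to $(E,\nabla) = (E,D+\eta)$, set $u := [\eta]_{D'} \in \HD^1(\End{E})$; since $D\eta$ is $D'$-exact by the proof of \Cref{zigzag}, the MC equation $D\eta + \eta^2 = 0$ forces $u^2 = 0$ in $\HD^2(\End{E})$, and the iterated extension structure of $\eta$ descends to one on $hu$ for every $h \in \C$. Hence $(E,D,hu) \in \hCHz$, and the isomorphism yields a unique $\eta_h$ with $D'\eta_h = 0$, $D\eta_h + \eta_h^2 = 0$, and $[\eta_h]_{D'} = hu$.

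Functoriality in $(E,\nabla)$ follows because $(E,\nabla) \mapsto (E, D+\eta_h)$ factors as
\[
\CdRz \simeq \hCDz \simeq \hCHz \xrightarrow{h\cdot} \hCHz \simeq \hCDz \simeq \CdRz,
\]
where the outer equivalences come from \Cref{semisimplification}. Scaling by $h$ on $\HD^\bullet$ is $\C$-linear and hence commutes with tensor product, dual and pullback; combined with the compatibilities of the outer equivalences, this gives the stated functoriality.

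For the polynomial-degree bound, I would induct along the iterated extension structure. Pick a $D$-invariant splitting $E = \bigoplus_{k=1}^{d+1} E_k$ of the defining filtration and choose the harmonic metric on $(E,D)$ to be the direct sum of harmonic metrics on the $E_k$, so that $D'$ also preserves the splitting. Writing $\eta = \sum_{i<j}\eta_{ij}$ with $\eta_{ij} \in \cA^1(\Hom{E_j}{E_i})$ and looking for $\eta_h = \sum_{i<j}\eta_{ij,h}$, the $(i,j)$-component of MC reads $D\eta_{ij,h} = P_h^{ij} := -\sum_{i<k<j}\eta_{ik,h}\,\eta_{kj,h}$. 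For $n = j-i = 1$ take $\eta_{i,i+1,h} := h\eta_{i,i+1}$, polynomial of degree $1$. For $n \geq 2$, assuming the $\eta_{i'j',h}$ for $j'-i' < n$ already constructed as polynomials of degree $\leq j'-i'$ satisfying their MC equations and $D'$-closedness, the Leibniz rules combined with the inductive MC give $DP_h^{ij} = D'P_h^{ij} = 0$, while the class identity $[P_h^{ij}]_{D'} = h^2[P_1^{ij}]_{D'} = -h^2[D\eta_{ij}]_{D'} = 0$ shows $P_h^{ij}$ is $D'$-exact. Applying the $DD'$-Lemma coefficient-wise in $h$ produces a polynomial $\beta_h$ of degree $\leq n$ with $P_h^{ij} = DD'\beta_h$. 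Combining with a fixed $D$- and $D'$-closed representative $\omega_{ij}$ of $[\eta_{ij}]_{D'}$ (which exists by the proof of \Cref{zigzag}), set $\eta_{ij,h} := D'\beta_h + h\omega_{ij}$; this is polynomial of degree $\leq n$ and satisfies $D\eta_{ij,h} = P_h^{ij}$, $D'\eta_{ij,h} = 0$ and $[\eta_{ij,h}]_{D'} = h[\eta_{ij}]_{D'}$. Summing, $\eta_h$ is polynomial of degree $\leq d$ and satisfies all the characterizing conditions of the first paragraph, hence equals the $\eta_h$ there by uniqueness.

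The main obstacle is precisely this last step: existence, uniqueness and functoriality are direct consequences of \Cref{semisimplification}, but the degree bound requires both the right inductive organization along the iterated extension filtration and the observation that the obstructions stay $DD'$-exact polynomially in $h$, which is enabled by the identity $[P_h^{ij}]_{D'} = h^2 [P_1^{ij}]_{D'} = 0$ together with a coefficient-wise application of the $DD'$-Lemma.
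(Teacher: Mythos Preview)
Your argument is correct. For existence, uniqueness and functoriality of $\eta_h$ you follow exactly the paper's route: transport the scaling $u\mapsto hu$ through the isomorphism $\hCDz\simeq\hCHz$, and observe that this composite is a functor compatible with tensor, dual and pullback.

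For the degree bound, both you and the paper induct along the extension filtration and isolate the top block, but the mechanisms differ. The paper argues abstractly: once all $\eta_{i,j,h}$ with $(i,j)\neq(0,d)$ are known to be polynomial by induction, the remaining constraints on $\eta_{0,d,h}$ form a linear system (in $\eta_{0,d,h}$) with right-hand side polynomial in $h$ of degree $\leq d$; existence and uniqueness of $\eta_h$ force this system to have a unique solution for each $h$, and linearity then gives polynomiality by solving coefficient-by-coefficient. You instead \emph{construct} a polynomial solution directly: you verify $DP_h^{ij}=D'P_h^{ij}=0$, show $[P_h^{ij}]_{D'}=h^2[P_1^{ij}]_{D'}=0$ so that each $h$-coefficient of $P_h^{ij}$ is $DD'$-exact, apply the $DD'$-Lemma coefficient-wise to produce $\beta_h$, and set $\eta_{ij,h}=D'\beta_h+h\omega_{ij}$; uniqueness then identifies this with the abstract $\eta_h$. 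The paper's route is shorter and avoids the explicit obstruction computations, while yours is more self-contained and makes transparent why the $DD'$-Lemma propagates polynomially through the filtration.
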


\begin{remark}
  Note that $\nabla_1=\nabla$ while $\nabla_0=D$ is the semisimplification.
\end{remark}

\begin{proof}
  Let us first show that $\left(E,D,h[\eta]\right)$ is an object of $\hCHz$.
  As $\eta$ is an extension, there exists a smooth decomposition $E=E_0\oplus\dotsb\oplus E_d$ of $E$
  such that $\eta$ belongs to $\bigoplus_{i<j}\cA^1(\Hom{E_j}{E_i})$. 
  Hence $h[\eta]$ belongs to $\bigoplus_{i<j}\HD^1(\Hom{E_j}{E_i})$. Moreover, $[\eta]^2=0$ implies
  $(h[\eta])^2=0$. Hence $\left(E,D,h[\eta]\right)$ satisfies all the conditions to be an object of $\hCHz$
  (see \Cref{morphisms}).

  As objects in $\hCHz$ are in bijection with objects in $\hCDz$, there exists a unique $\eta_h$
  such that $\left(E,D,\eta_h\right)$ is an object of $\hCDz$ mapping to $\left(E,D,h[\eta]\right)$.
  This $\eta_h$ is thus characterized by $[\eta_h]=h[\eta]$.

  It remains to prove that that for $d$ as in the statement, $h\mapsto D+\eta_h$ is polynomial of degree $\leq d$.
  We can do this by induction on $d$. If $d=0$, $\nabla$ is semisimple and hence $\eta_h=0$ for all $h$.
  Assume $d\geq 1$ and consider the decomposition $E=E_0\oplus\dotsb\oplus E_d$ as above.
  Then $D+\eta_h$ is of the form
  \begin{equation*}
    \begin{pmatrix}
      D_0    & \eta_{0,1,h} & \dotsb & \eta_{0,d-1,h} & \eta_{0,d,h}   \\
      0      & D_1                & \ddots &                      & \eta_{1,d,h}   \\
      \vdots & \ddots             & \ddots & \ddots               & \vdots               \\
      \vdots &                    & \ddots & D_{d-1}              & \eta_{d-1,d,h} \\
      0      & \dotsb             & \dotsb & 0                    & D_d
    \end{pmatrix}.
  \end{equation*}
  By the induction hypothesis, for $(i,j)\neq (0,d)$, $\eta_{i,j,h}$ is a polynomial of degree $\leq j-i$ in $h$.
  We want to show that $\eta_{0,d,h}$ is polynomial of degree $\leq d$ in $h$.
  As $D\eta + \eta^2=0$ and $D'\eta=0$, $\eta_{0,d,h}$ satisfies the \emph{linear} system of equations
  \begin{equation}\label{equationeta}
    D\eta_{0,d,h}= -\sum_{i=1}^{d-1} \eta_{0,i,h}\eta_{i,d,h}\text{ and }D'\eta_{0,d,h}=0.
  \end{equation}
  Now, $-\sum_{i=1}^{d-1} \eta_{0,i,h}\eta_{i,d,h}$ is a polynomial $\sum_{k=0}^d\xi_kh^k$ of degree $\leq d$ in $h$.
  By existence and uniqueness of $\eta_h$, we know that for each $h$, (\ref{equationeta}) has a unique solution $\eta_{0,d,h}$.
  Hence, by linearity of (\ref{equationeta}), for each $k=0,\dotsc,d$, the system of equations
  \begin{equation*}
    Da= \xi_k\text{ and }D'a=0
  \end{equation*}
  has a unique solution, that we denote $a_k$. Then $\sum_{k=0}^da_kh^k$ is a solution to (\ref{equationeta}),
  and by uniqueness, it must be $\eta_{0,d,h}$. Hence the result.
\end{proof}

%-------------------------------------------------------------------------------------------------------------------------

\subsection{The case of smooth proper DM stacks}

%-------------------------------------------------------------------------------------------------------------------------

In this subsection, we prove \Cref{semisimplification,deformationtosemisimplification} when the base $X$ is a smooth proper DM stack,
using the methods of \cite{simpsonLocalSystemsProper2011}. Although the non-Abelian Hodge correspondence extends to smooth proper DM stacks,
the formality results are less clear (see \cite[8.6]{simpsonLocalSystemsProper2011}). The difficulty is that the $DD'$-Lemma
is proved via Kähler identities, but that not every smooth proper DM stack is Kähler. Although we could restrict to the Kähler case
for the purposes of this paper, we choose to give the proof in the more general case. Instead of trying to prove formality, we use a workaround to just deduce
the smooth proper DM stack case of \Cref{semisimplification,deformationtosemisimplification} from the case of projective smooth varieties.

All stacks considered will be smooth.
Note that by DM stack we mean algebraic Deligne-Mumford stack over $\C$. These have realizations as smooth analytic and differentiable Deligne-Mumford stacks,
and hence we may talk about holomorphic and $\Cinf$-bundles over them. See \cite[2]{eyssidieuxInstantonsFramedSheaves2018} for more on this.

As mentioned above, the decomposition $D=D'+D''$ for $(E,D)$ semisimple over $X$ smooth proper DM stack exists, is unique and enjoys
the same compatibilities as in the case where $X$ is a compact Kähler manifold \cite[9.7]{simpsonLocalSystemsProper2011}.
Note that compatibility with respect to pullbacks is for algebraic maps $Y\ra X$.

\begin{proposition}\label{semisimplificationstacks}
  Any flat connection $\nabla$ on a $\Cinf$ bundle $E$ on a smooth proper DM stack $X$
  has a unique decomposition $\nabla=D+\eta$ with $D$ flat semisimple, $\eta$ iterated extension, and $D'\eta=0$.
  This decomposition is compatible with taking tensor products, duals and pullbacks,
  and the mapping $(E,\nabla)\mapsto (E,D)$ defines a functor.

  For each $h\in\C$ there exists a unique $\eta_h\in\cA^1(\End{E})$ such that for each algebraic map $p:Y\ra X$ with $Y$ smooth projective variety,
  $p^*(\eta_h)$ coincides with $(p^*\eta)_h$ as defined in \Cref{deformationtosemisimplification}.
  For each $h$, the mapping $(E,\nabla)\mapsto (E,\nabla_h=D+\eta_h)$
  is a functor, compatible with tensor products, duals and pullbacks.

  Moreover, if $d$ is the minimal number of extensions necessary to construct $(E,\nabla)$ from semisimple flat bundles,
  the map $h\mapsto \nabla_h=D+\eta_h$ is polynomial of degree $\leq d$.
\end{proposition}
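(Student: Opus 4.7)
The plan is to reduce the statement on a smooth proper DM stack $X$ to the compact Kähler case established in \Cref{semisimplification} and \Cref{deformationtosemisimplification}, using descent along a proper surjective algebraic map $p:Y\to X$ from a smooth projective variety $Y$ (such a $Y$ exists by a Chow-type argument for smooth proper DM stacks). Throughout, I use that Simpson's decomposition $D=D'+D''$ and the compatibility of $D'$ with algebraic pullbacks extend to smooth proper DM stacks by \cite[9.7]{simpsonLocalSystemsProper2011}; in particular, the conditions ``$D$ flat semisimple'', ``$\eta$ iterated extension'', and ``$D'\eta=0$'' are defined intrinsically on $X$, even when $X$ is not Kähler.

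For uniqueness on $X$, suppose $\nabla=D_1+\eta_1=D_2+\eta_2$ are two decompositions meeting the conditions. For any algebraic map $p:Y\to X$ with $Y$ smooth projective, the pulled-back pairs $(p^*D_i,p^*\eta_i)$ satisfy the hypotheses of \Cref{semisimplification} on $Y$, which forces them to coincide. Applying this to a surjective $p$ (or to a family of such $p$ separating points of $X$) yields $D_1=D_2$ and $\eta_1=\eta_2$ on $X$. Uniqueness of $\eta_h$, in its pullback-characterized form, is argued in the same way using \Cref{deformationtosemisimplification} on $Y$.

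For existence, the semisimple part $D$ is obtained as the flat connection associated with the semisimplification of the monodromy local system of $\nabla$ (through Simpson's non-Abelian Hodge correspondence on stacks), identified with $E$ as a $\Cinf$ bundle so that $\eta:=\nabla-D$ is an iterated extension satisfying $D\eta+\eta^2=0$. The delicate condition is $D'\eta=0$: it depends on the choice of $\Cinf$ identification of $E$ with the semisimplification. I would select this identification by first pulling back along $p:Y\to X$, where \Cref{semisimplification} supplies a canonical choice, and then descending from $Y$ to $X$ using the descent datum on $Y\times_X Y$ (itself covered by a smooth projective scheme, on which uniqueness again applies). Once $\eta$ is fixed, the family $\eta_h$ is constructed on $X$ by directly solving the linear system appearing in the proof of \Cref{deformationtosemisimplification}, namely $D\eta_{0,j,h}=-\sum_i\eta_{0,i,h}\eta_{i,j,h}$ together with $D'\eta_{0,j,h}=0$, which make sense on $X$ as soon as $D$, $D'$, and the matrix structure of $\eta$ are defined there. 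The polynomial degree bound and functoriality/tensor/duals/pullback compatibilities then transfer from the Kähler case via uniqueness.

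The main obstacle is the descent step in the existence argument: showing that the canonical $\Cinf$ identification on $Y$ guaranteed by the $DD'$-Lemma descends to $X$ in the absence of Kähler identities on $X$. This is resolved by exploiting the uniqueness of the identification on $Y\times_X Y$ (after a further Chow cover if $Y\times_X Y$ is itself a stack) and invoking faithfully flat descent, which substitutes for a direct formality statement on $X$. With this step in hand, all remaining claims are a routine transfer of the Kähler-case results through the compatibility of $D'$ and of the iterated-extension structure with algebraic pullbacks.
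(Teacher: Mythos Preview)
Your overall strategy---reduce to the compact K\"ahler case by descent from smooth projective varieties---is the same as the paper's. However, two steps in your execution are not justified, and the paper's proof is organized precisely to avoid them.

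First, you propose descending the canonical $\Cinf$ identification (and hence the decomposition $\nabla=D+\eta$) along a single proper surjective $p:Y\to X$ via the cocycle on $Y\times_X Y$. But $p$ is not \'etale, and there is no general descent theorem for $\Cinf$ bundles or $\Cinf$ morphisms along such maps; invoking ``faithfully flat descent'' here is not legitimate for smooth data. The paper instead uses Simpson's simplicial hypercovering $Z_\bullet\to X$ from \cite[\S5]{simpsonLocalSystemsProper2011}, whose key feature is that it \emph{does} satisfy descent for smooth vector bundles and the natural structures on them, and moreover contains an open $Z'\subset Z_0$ that is \'etale surjective onto $X$. Functoriality of the K\"ahler decomposition on each $Z_k$ then makes it compatible with the simplicial structure, and descent is built in.

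Second, you propose constructing $\eta_h$ by solving the linear system $D\eta_{0,j,h}=-\sum_i\eta_{0,i,h}\eta_{i,j,h}$, $D'\eta_{0,j,h}=0$ directly on $X$. But the unique solvability of this system in the proof of \Cref{deformationtosemisimplification} rests on the bijection $\hCDz\simeq\hCHz$, which in turn comes from the $DD'$-Lemma. The paper explicitly points out that the $DD'$-Lemma is not known on a general smooth proper DM stack (it is proved via K\"ahler identities, and not every such stack is K\"ahler). The paper therefore constructs $\eta_h$ by descent from $Z_\bullet$ as well, and then verifies the universal property $p^*(\eta_h)=(p^*\eta)_h$ for an arbitrary $p:Y\to X$ by passing to a resolution $\tilde S$ of $Y\times_X Z_0$ and using injectivity of pullback along the \'etale surjective $S'\to Y$.
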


\begin{proof}
  We use the results from \cite[§5]{simpsonLocalSystemsProper2011}. There, a hypercovering $Z_\bullet\ra X$
  by a simplicial scheme $Z_\bullet$ satisfying nice properties is constructed. Each $Z_i$ is a projective smooth variety,
  if $Z'\subset Z_0$ is the locus where $q:Z_0\ra X$ is étale, then $Z'\ra X$ is surjective, and the hypercovering satisfies
  descent for smooth vector bundles and natural structures (connections, differential forms) on smooth vector bundles.

  Then $(E,\nabla)$ corresponds to a bundles $(F,\nabla^F)$ on $Z_\bullet$ together with descent data.
  As each $Z_k$ is compact Kähler, by \Cref{semisimplification}, $\nabla^F$ has a unique decomposition $\nabla^F=D^F+\eta^F$
  with the desired properties.
  This decomposition is functorial and thus compatible with the descent data. Applying descent gives the desired decomposition
  $\nabla=D+\eta$ and its uniqueness. The compatibilities follow from uniqueness.
  
  Similarly, for each $h\in \C$, descent provides $\eta_h\in \cA^1(\End{E})$ such that its pullback to $Z_0$ is $(q^*\eta)_h$
  as defined in \Cref{deformationtosemisimplification}. Let $p:Y\ra X$ be an algebraic map from a smooth projective variety.
  Then $S=Y\times_X Z_0$ is a projective variety, potentially singular, but contains the open set $S'=Y\times_X Z'$
  which is étale surjective over $Y$. We may choose a resolution of singularities $\tilde{S}$ of $S$ which is an isomorphism over $S'\subset S$.
  We now have a diagram
  % https://q.uiver.app/#q=WzAsNSxbMSwwLCJcXHRpbGRle1N9Il0sWzMsMCwiWl8wIl0sWzMsMiwiWC4iXSxbMSwyLCJZIl0sWzAsMCwiUyciXSxbMywyLCJwIl0sWzEsMiwicSIsMl0sWzAsMywicSciLDJdLFswLDEsInAnIl0sWzQsMCwiIiwyLHsic3R5bGUiOnsidGFpbCI6eyJuYW1lIjoiaG9vayIsInNpZGUiOiJ0b3AifX19XSxbNCwzLCJcXHRleHR7w6l0LiBzdXJqLn0iLDJdLFszLDEsIlxcYWxwaGEiLDAseyJvZmZzZXQiOi0xLCJzaG9ydGVuIjp7InNvdXJjZSI6NDAsInRhcmdldCI6NDB9LCJsZXZlbCI6Mn1dXQ==
  \[\begin{tikzcd}
    {S'} & {\tilde{S}} && {Z_0} \\
    \\
    & Y && {X.}
    \arrow[hook, from=1-1, to=1-2]
    \arrow["{\text{ét. surj.}}"', from=1-1, to=3-2]
    \arrow["{p'}", from=1-2, to=1-4]
    \arrow["{q'}"', from=1-2, to=3-2]
    \arrow["q"', from=1-4, to=3-4]
    \arrow["\alpha", shift left, shorten <=22pt, shorten >=22pt, Rightarrow, from=3-2, to=1-4]
    \arrow["p", from=3-2, to=3-4]
  \end{tikzcd}\]
  As $\tilde{S}\ra Y$ is étale surjective on an open subscheme, the map $\cA^1(p^*\End{E})\ra\cA^1(q^{\prime *}p^*\End{E})$ is injective.
  Now $p^*(\eta_h)$ and $(p^*\eta)_h$ both pullback to $(p^{\prime*}q^*\eta)_h$ on $\tilde{S}$ under the identification
  $\alpha^*:p^{\prime*}q^*\End{E}\simeq q^{\prime*}p^*\End{E}$. Hence they are equal, as desired. Again, the compatibilities follow from uniqueness.
\end{proof}

%-------------------------------------------------------------------------------------------------------------------------

\subsection{Sketch of a direct proof of the semisimplification}

%-------------------------------------------------------------------------------------------------------------------------

In this subsection, we briefly sketch a more direct proof of existence and uniqueness of the decomposition in \Cref{semisimplification}.

For existence, assume that $(E,\nabla)$ is a flat bundle over $X$ compact Kähler that is an extension of a semisimple connection by another semisimple connection.
Then there exists a smooth decomposition $E=E_1\oplus E_2$ such that $\nabla$ takes the form
\begin{equation*}
  \begin{pmatrix}
    D_1 & \eta \\
    0   & D_2
  \end{pmatrix}
\end{equation*}
where $D_i$ is a semisimple connection on $E_i$, $i=1,2$, and $\eta\in\cA^1(\Hom{E_2}{E_1})$ is $1$-form,
which is closed for the connection $D$ on $\Hom{E_2}{E_1}$ induced by $D_1$ and $D_2$.
As $\mathrm{H}^1(\cA^\bullet(\Hom{E_2}{E_1}),D)$ coincides with $\mathrm{H}^1(\mathrm{Z}_{D'}^\bullet(\Hom{E_2}{E_1}),D)$ (\Cref{zigzag}),
$[\eta]$ shares its cohomology class with a $D$ and $D'$-closed $1$-form $\xi\in\cA^1(\Hom{E_2}{E_1})$.

Now, the isomorphism class of the extension given by $\eta$ depends only on the cohomology class $[\eta]$. Hence $(E,\nabla)$ is isomorphic to $(E,\nabla')$
with
\begin{equation*}
  \nabla'=
  \begin{pmatrix}
    D_1 & \xi \\
    0   & D_2
  \end{pmatrix}.
\end{equation*}
The connection $\nabla'$ clearly admits the desired decomposition. Pulling it back along an isomorphism $f:(E,\nabla)\ra(E,\nabla')$
gives the desired decomposition of $\nabla$. Existence in the general case can proved using similar arguments by induction on the number of extensions necessary
to construct $\nabla$ from semisimple connections.

Let us now turn to uniqueness. To prove uniqueness, we need only prove the following Lemma.

\begin{lemma*}
  Let $(E,\nabla^E=D^E+\eta^E)$ and $(F,\nabla^F=D^F+\eta^F)$ be two bundles with flat connections on $X$ compact Kähler
  with decompositions satisfying \Cref{semisimplification}. Then any map $f:(E,\nabla^E)\ra (F,\nabla^F)$
  is $D'$-closed for the operator D' induced by $D^E$ and $D^F$.
\end{lemma*}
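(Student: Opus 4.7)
My plan is to prove the lemma by induction on the matrix components relative to $C^\infty$-splittings adapted to the iterated extension structure of $\eta^E$ and $\eta^F$. Choose $C^\infty$-splittings $E = \bigoplus_{k=1}^{d_E} V_k$ and $F = \bigoplus_{l=1}^{d_F} W_l$ in which $D^E, D^F$ are block-diagonal — each block a semisimple flat connection on $V_k$ or $W_l$ — and $\eta^E, \eta^F$ are strictly upper-triangular one-forms. Then $\mathrm{Hom}(E,F) = \bigoplus_{l,k} \mathrm{Hom}(V_k, W_l)$ as a $C^\infty$-bundle, and by functoriality of the non-Abelian Hodge decomposition with respect to direct sums and duals, the operator $D'$ on $\cA^\bullet(\mathrm{Hom}(E,F))$ is itself block-diagonal. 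Writing $f = (f_{l,k})$ with $f_{l,k} \in \cA^0(\mathrm{Hom}(V_k, W_l))$, it suffices to show $D' f_{l,k} = 0$ for every pair $(l,k)$.

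The flatness condition $\nabla f = 0$ expands block-by-block into
\begin{equation*}
  D\, f_{l,k} \;=\; -\sum_{b > l} \eta^F_{l,b}\, f_{b,k} \;+\; \sum_{a < k} f_{l,a}\, \eta^E_{a,k},
\end{equation*}
where $D$ denotes the induced semisimple connection on the block $\mathrm{Hom}(V_k, W_l)$. I induct on $p = l - k$ from its maximal value $d_F - 1$ downwards. In the base case $(l,k) = (d_F,1)$ both sums are empty, so $D f_{l,k} = 0$; the degree-zero case of \Cref{zigzag}, which gives $\ker D = \ker D'$ on $\cA^0$ for any semisimple flat bundle, then yields $D' f_{l,k} = 0$. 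For the inductive step, fix $(l_0, k_0)$ with $l_0 - k_0 = p_0$ and apply $D'$ to the displayed equation. Using the graded Leibniz rule, the Kähler anticommutation $DD' + D'D = 0$ (immediate from $D^2 = (D')^2 = (D'')^2 = 0$), the block-by-block vanishings $D'\eta^E = 0$ and $D'\eta^F = 0$, and the inductive hypothesis $D' f_{b,k_0} = 0$ for $b > l_0$ and $D' f_{l_0,a} = 0$ for $a < k_0$, every term on the right-hand side vanishes, leaving $DD' f_{l_0, k_0} = 0$. Hence $\alpha := D' f_{l_0, k_0} \in \cA^1(\mathrm{Hom}(V_{k_0}, W_{l_0}))$ is simultaneously $D$-closed, $D'$-closed and $D'$-exact. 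The $DD'$-Lemma (\Cref{ddp}) then produces $\beta \in \cA^{-1} = 0$ with $\alpha = DD'\beta$, forcing $\alpha = 0$ and completing the induction.

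The main obstacle is organizational: checking that $D'$ is genuinely block-diagonal with respect to the chosen splittings (which reduces to functoriality of Simpson's $(D',D'')$-decomposition under direct sums and duals) and keeping the signs in the graded Leibniz computation straight, so that the inductive vanishings combine with $D'\eta = 0$ to kill every summand. Once this bookkeeping is in place, the only analytic inputs beyond the induction are the Kähler identities and the $DD'$-Lemma, both already established earlier in the section.
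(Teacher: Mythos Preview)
Your argument is correct. The induction on $p=l-k$ is well organised: the base case reduces to $\ker D=\ker D'$ in degree~$0$ (a genuine consequence of \Cref{zigzag}), and in the inductive step the anticommutation $DD'+D'D=0$ together with $D'\eta^E=D'\eta^F=0$ and the inductive hypothesis do kill every term, leaving $DD'f_{l_0,k_0}=0$; the $DD'$-Lemma in degree~$1$ then forces $D'f_{l_0,k_0}=0$ since $\cA^{-1}=0$. The block-diagonality of $D'$ that you need is indeed a consequence of the compatibility of the $(D',D'')$ decomposition with direct sums, and the fact that $D^E,D^F$ may be taken block-diagonal with semisimple blocks follows from how iterated extensions are built in $\hat{\Cat}$.

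The paper's route is different: it fixes the short exact sequence $F_1\to F\to F_2$ coming from one extension step, forms the induced long exact sequences for both $(\cA^\bullet,\hat d)$ and $(\mathrm{Z}_{D'}^\bullet,\hat d)$, and applies the $5$-lemma together with the formality quasi-isomorphism of \Cref{zigzag} on the semisimple pieces to conclude that the inclusion $\mathrm{H}^0(\mathrm{Z}_{D'}^\bullet(\Hom{E}{F}))\hookrightarrow\mathrm{H}^0(\cA^\bullet(\Hom{E}{F}))$ is an isomorphism; the double induction is then on the extension depth of $F$ first, then of $E$. Your approach folds both inductions into the single parameter $l-k$ and replaces the exact-sequence machinery by a direct appeal to the $DD'$-Lemma, which makes it more elementary and self-contained; the paper's approach, by contrast, stays closer to the formality framework already set up and makes the role of \Cref{zigzag} more transparent.
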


Indeed, given $2$ decompositions $\nabla=D+\eta=\tilde{D}+\tilde{\eta}$, according to the Lemma, the map $\id_E:(E,D+\eta)\ra(E,\tilde{D}+\tilde{\eta})$
must be $D'$-closed. But $D'$-closed implies flat (this follows from \Cref{zigzag}).
Hence $\id_E:(E,D)\ra (E,\tilde{D})$ is a flat map, i.e. $D=\tilde{D}$.

Let us now explain the proof of the Lemma in a simple case. Assume that $D^F+\eta^F$ is of the form
\begin{equation*}
  \begin{pmatrix}
    D_1 & \eta \\
    0   & D_2
  \end{pmatrix}
\end{equation*}
for a smooth decomposition $F=F_1\oplus F_2$, and that $D^E$ is semisimple.
Then, we have a map of long exact sequences in cohomology
% https://q.uiver.app/#q=WzAsMTQsWzEsMCwiXFxtYXRocm17SH1eMChcXGNBXlxcYnVsbGV0KFxcSG9te0V9e0ZfMX0pKSJdLFsxLDEsIlxcbWF0aHJte0h9XjAoXFxtYXRocm17Wn1fe0QnfV5cXGJ1bGxldChcXEhvbXtFfXtGXzF9KSkiXSxbMCwwLCIwIl0sWzAsMSwiMCJdLFsyLDAsIlxcbWF0aHJte0h9XjAoXFxjQV5cXGJ1bGxldChcXEhvbXtFfXtGfSkpIl0sWzIsMSwiXFxtYXRocm17SH1eMChcXG1hdGhybXtafV97RCd9XlxcYnVsbGV0KFxcSG9te0V9e0ZfMX0pKSJdLFsxLDIsIlxcbWF0aHJte0h9XjAoXFxjQV5cXGJ1bGxldChcXEhvbXtFfXtGXzJ9KSkiXSxbMSwzLCJcXG1hdGhybXtIfV4wKFxcbWF0aHJte1p9X3tEJ31eXFxidWxsZXQoXFxIb217RX17Rl8xfSkpIl0sWzIsMiwiXFxtYXRocm17SH1eMShcXGNBXlxcYnVsbGV0KFxcSG9te0V9e0ZfMX0pKSJdLFsyLDMsIlxcbWF0aHJte0h9XjAoXFxtYXRocm17Wn1fe0QnfV5cXGJ1bGxldChcXEhvbXtFfXtGXzF9KSkuIl0sWzAsMiwiXFxjZG90cyJdLFswLDMsIlxcZG90c2IiXSxbMywwLCJcXGNkb3RzIl0sWzMsMSwiXFxkb3RzYiJdLFsxLDAsImlfMV4wIl0sWzIsMF0sWzMsMV0sWzAsNF0sWzEsNV0sWzcsNiwiaV8yXjAiXSxbNSw0LCJpXjAiXSxbOSw4LCJpXzFeMSJdLFs2LDhdLFs3LDldLFsxMCw2XSxbMTEsN10sWzQsMTJdLFs1LDEzXV0=
\[\begin{tikzcd}
	0 & {\mathrm{H}^0(\cA^\bullet(\Hom{E}{F_1}))} & {\mathrm{H}^0(\cA^\bullet(\Hom{E}{F}))} & \cdots \\
	0 & {\mathrm{H}^0(\mathrm{Z}_{D'}^\bullet(\Hom{E}{F_1}))} & {\mathrm{H}^0(\mathrm{Z}_{D'}^\bullet(\Hom{E}{F_1}))} & \dotsb \\
	\cdots & {\mathrm{H}^0(\cA^\bullet(\Hom{E}{F_2}))} & {\mathrm{H}^1(\cA^\bullet(\Hom{E}{F_1}))} \\
	\dotsb & {\mathrm{H}^0(\mathrm{Z}_{D'}^\bullet(\Hom{E}{F_1}))} & {\mathrm{H}^0(\mathrm{Z}_{D'}^\bullet(\Hom{E}{F_1})).}
	\arrow[from=1-1, to=1-2]
	\arrow[from=1-2, to=1-3]
	\arrow[from=1-3, to=1-4]
	\arrow[from=2-1, to=2-2]
	\arrow["{i_1^0}", from=2-2, to=1-2]
	\arrow[from=2-2, to=2-3]
	\arrow["{i^0}", from=2-3, to=1-3]
	\arrow[from=2-3, to=2-4]
	\arrow[from=3-1, to=3-2]
	\arrow[from=3-2, to=3-3]
	\arrow[from=4-1, to=4-2]
	\arrow["{i_2^0}", from=4-2, to=3-2]
	\arrow[from=4-2, to=4-3]
	\arrow["{i_1^1}", from=4-3, to=3-3]
\end{tikzcd}\]
Because the induced connections on $\Hom{E}{F_1}$ and $\Hom{E}{F_2}$ are semisimple, by \Cref{zigzag},
we see that $i^0_1$, $i^0_2$ and $i^1_1$ are isomorphisms. Hence, by the $5$-lemma, so is $i^0$.
Notice that a map $f:(E,\nabla^E)\ra(F,\nabla^F)$ is an element of $\mathrm{H}^0(\cA^\bullet(\Hom{E}{F})$.
As $i^0$ is an isomorphism, $f$ must be $D'$-closed. This proves the Lemma in this case.

The case where $\nabla^E$ is semisimple can then be dealt with by induction on the number of extensions necessary to construct $\nabla^F$
from semisimple connections, using the $5$-lemma at each induction step. The general case is then deduced by induction
on the number of extensions necessary to define $\nabla^E$, using again the $5$-lemma at each induction step.

%-------------------------------------------------------------------------------------------------------------------------

\section{Ocneanu rigidity}\label{sectionOcneanu}

%-------------------------------------------------------------------------------------------------------------------------

The main ingredient in the proof of our main result (\Cref{mainresult}) is Ocneanu rigidity, which we now state, and a textbook account of which is available in
\cite[chp. 9.1]{etingofTensorCategories2015}.

\begin{theorem}[{Ocneanu rigidity \cite[2.28]{etingofFusionCategories2005}}]\label{Ocneanurigidity}
  A fusion category does not admit nontrivial infinitesimal deformations (i.e. its associator does not).
  In particular, the number of such categories up to equivalence with a given Grothendieck ring is finite.
\end{theorem}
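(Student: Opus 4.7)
The plan is to follow the classical deformation-theoretic argument: interpret infinitesimal deformations of the associator of a fusion category $\Cat$ as cocycles in a cohomology theory (Davydov--Yetter cohomology), interpret gauge equivalences as coboundaries, and then show that the relevant cohomology group vanishes for any fusion category over $\C$.

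First I would set up the deformation complex. Given a fusion category $\Cat$ with associator $\alpha$, an infinitesimal deformation is an associator $\alpha+\epsilon\alpha_1$ defined over $\C[\epsilon]/\epsilon^2$ (with all other structure fixed, in particular the tensor bifunctor and the unit constraints). The pentagon axiom, expanded to first order in $\epsilon$, becomes a linear equation on $\alpha_1$ of the form $\delta^3\alpha_1=0$, where $\delta^\bullet$ denotes the Davydov--Yetter differential on the complex $C^\bullet_{DY}(\Cat)$ whose $n$-th term is $\mathrm{Nat}(\otimes^n,\otimes^n)$ (natural endotransformations of the $n$-fold tensor product). Trivial deformations, namely those coming from an infinitesimal monoidal autoequivalence $\mathrm{id}+\epsilon\psi$ with $\psi\in\mathrm{Nat}(\otimes^2,\otimes^2)$, produce precisely the $3$-coboundaries. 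Hence the space of infinitesimal deformations modulo trivial ones is identified with $\mathrm{H}^3_{DY}(\Cat)$.

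Next I would prove $\mathrm{H}^3_{DY}(\Cat)=0$. The key input is semisimplicity of $\Cat$ together with the nonvanishing of categorical dimensions of simple objects, so that one has a well-defined notion of ``averaging'' over the finite set of isomorphism classes of simples. Concretely, one identifies the Davydov--Yetter complex with the Hochschild complex of a separable algebra: the regular $\Cat$-bimodule category $\Cat$ itself gives rise, via internal End, to an algebra in the Drinfeld center $\mathcal{Z}(\Cat)$ which is separable (this separability is the categorical incarnation of Maschke's theorem and, over $\C$, uses the nonvanishing of the global dimension of $\Cat$). Hochschild cohomology of a separable algebra vanishes in positive degrees, which gives the desired vanishing. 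The finiteness assertion then follows: fixing the Grothendieck ring fixes the underlying $\C$-linear semisimple structure and the dimensions of the $\Hom$ spaces appearing in the associator, so associators form an affine algebraic variety on which the gauge group of monoidal autoequivalences (fixing the linear structure) acts algebraically; the vanishing of $\mathrm{H}^3_{DY}$ shows that every $\Cat$-orbit is open, and by quasi-compactness there are only finitely many such orbits.

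The main obstacle is the cohomology vanishing in the third paragraph. The formal setup (identification of deformations with cocycles and of equivalences with coboundaries) is essentially bookkeeping, but the separability/nonvanishing-of-global-dimension step is a substantial theorem in its own right, going through the analysis of the distinguished invertible object and the proof that $\dim\Cat\neq 0$ over $\C$. For the purposes of this paper I would simply invoke this as a black box from \cite{etingofFusionCategories2005}, since the statement of Ocneanu rigidity is used here only as an input to the deformation argument in the outline \ref{outline}.
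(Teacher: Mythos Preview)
The paper does not give a proof of this theorem at all: it is stated with a citation to \cite[2.28]{etingofFusionCategories2005} and used as a black box, exactly as you yourself suggest in your final paragraph. Your sketch via Davydov--Yetter cohomology and the vanishing argument based on $\dim\Cat\neq 0$ is the standard route taken in the cited reference, so there is nothing to compare against here.
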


As a corollary of \Cref{Ocneanurigidity} and we get.

\begin{corollary}\label{corollarydeformations}
  Let $C$ be a ribbon or braided category over $\C$.
  Then for any continuous family of ribbon or braided fusion categories $(C_t)_{t\in X}$, with $C_0=C$, $X$ path-connected,
  and where only the associators vary, $C_t$ is isomorphic to $C$ for all $t$.
\end{corollary}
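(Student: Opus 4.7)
The plan is to parameterize ribbon (resp. braided) fusion structures on the fixed underlying linear category by an analytic variety $V$, to invoke Ocneanu rigidity to show that gauge equivalence classes are open in $V$, and then to use path-connectedness of $X$ to conclude that $C_t \simeq C_0$ for all $t$.

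More precisely, since only the associators vary, all $C_t$ share the same underlying $\C$-linear category and tensor product on objects, hence the same Grothendieck ring $R$. The space of associators (together with braidings and twists where relevant) satisfying the pentagon, hexagon and ribbon axioms cuts out an analytic subvariety $V \subset \C^N$ for some $N$, on which the natural gauge group $G$ of monoidal automorphisms acts algebraically. Two points of $V$ give equivalent ribbon (resp. braided) fusion structures if and only if they lie in the same $G$-orbit, and by the finiteness part of \Cref{Ocneanurigidity} there are only finitely many such orbits. The infinitesimal part of Ocneanu rigidity, which states that the first-order deformations of the associator within the pentagon/hexagon locus are all gauge-trivial, translates into the statement that at every $F \in V$ the tangent space $T_F V$ coincides with the tangent space to the $G$-orbit through $F$. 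By the orbit-tangent lemma in analytic geometry, this forces each $G$-orbit to be open in $V$. Having finitely many open orbits, each is also closed as the complement of the finite union of the remaining orbits, so the partition of $V$ into orbits is a partition into clopen subsets.

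The continuous family $(C_t)_{t \in X}$ corresponds to a continuous map $\Phi : X \to V$, and composing with the (now locally constant) orbit map $V \to V/G$ gives a continuous map from $X$ to the finite discrete set of equivalence classes. Since $X$ is path-connected, this composition is constant, so $C_t \simeq C_0 = C$ for every $t \in X$.

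The main obstacle will be Step 2, namely the passage from the abstract statement ``no nontrivial infinitesimal deformations'' in \Cref{Ocneanurigidity} to the concrete claim that $T_F V$ equals the tangent space to the $G$-orbit at every $F$. This requires identifying the categorical first-order deformation space with the appropriate Zariski tangent space inside the parameter variety of pentagon/hexagon solutions and checking that the image of the infinitesimal gauge action is precisely the subspace of coboundaries in the relevant Davydov–Yetter-type complex; this verification is standard but depends on the explicit cohomological formulation of Ocneanu rigidity given in \cite[chp.~9.1]{etingofTensorCategories2015}, which must be extended to the braided/ribbon setting by including the hexagon and twist axioms in $V$.
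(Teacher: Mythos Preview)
Your approach is correct and is essentially the same as the paper's, which defers the key step (constancy of the isomorphism class along arcs) to \cite[7.7]{godfardHodgeStructuresConformal2024} and then invokes path-connectedness; your sketch is precisely a reconstruction of that deferred argument. You also correctly flag the one genuine technical point, namely that \Cref{Ocneanurigidity} as stated is for fusion categories and must be upgraded to the braided/ribbon setting before the tangent-space comparison $T_F V = T_F(G\cdot F)$ goes through; one minor simplification is that finiteness of orbits is unnecessary, since openness of each orbit already makes $V/G$ discrete.
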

\begin{proof}
  By \cite[7.7]{godfardHodgeStructuresConformal2024}
  (which is a corollary of \Cref{Ocneanurigidity}), the isomorphism class of $C_t$ is constant
  along arcs in $X$. As $X$ is path-connected, the isomorphism class of $C_t$ is independent of $t$.
\end{proof}

%-------------------------------------------------------------------------------------------------------------------------

\section{Proof of the main result}\label{sectionproof}

%-------------------------------------------------------------------------------------------------------------------------

In this section, $\Nu$ is a modular, ribbon or braided functor over $\C$ with
associated modular, ribbon or braided category $\CN$.

As in \Cref{semisimplificationstacks}, for any connection $\nabla$ over a smooth proper DM stack, we will denote by
$(\nabla_h)_{h\in\C}$ the canonical deformation to the semisimplification. The deformation is polynomial
and $\nabla_1=\nabla$ while $\nabla_0$ is semisimple. As this deformation is functorial, for each $h\in \C$,
the $(\Nu_g(\ul),\nabla_h)$ together with the gluing, forgetful and permutation isomorphism of $\Nu$
form a geometric functor, that we denote $\Nu_h$. This geometric functor has an associated category $\mathrm{C}_{\Nu,h}$.
Note that by the definition of the functor $\Nu\mapsto \CN$, $\mathrm{C}_{\Nu,h}$ has the same underlying braiding (and twist) as $\CN$,
and only the associator may vary. Moreover, as each $\nabla_h$ is a polynomial in $h$, its monodromy varies continuously in $h$.
We can thus apply \Cref{corollarydeformations} to the family $(\mathrm{C}_{\Nu,h})_{h\in\C}$. Hence $\mathrm{C}_{\Nu,0}$ is isomorphic to $\CN$.
By full-faithfulness in \Cref{fullfaithfulnesses}, this implies that $\Nu_0$ is isomorphic to $\Nu$.
In particular, for each $g$ and $\ul$, $(\Nu_g(\ul),\nabla)$ is isomorphic to $(\Nu_g(\ul),\nabla_0)$, which is semisimple.

%-------------------------------------------------------------------------------------------------------------------------

\bibliographystyle{plain}
\bibliography{biblio}

\end{document}